\setlist[enumerate]{leftmargin=*}
\newcommand{\R}{\mathds{R}}
\newcommand{\Z}{\mathds{Z}}
\newcommand{\B}[1]{\{0,1\}^{#1}}
\newcommand{\cC}{\mathcal{C}}
\newcommand{\cO}{\mathfrak{O}}
\newcommand{\cP}{\mathcal{P}}
\newcommand{\cS}{\mathfrak{S}}
\newcommand{\cX}{\mathcal{X}}
\newcommand{\bigO}{\mathcal{O}}
\DeclareMathOperator{\conv}{conv}
\newcommand{\group}{\Gamma}
\newcommand{\perm}{\gamma}
\newcommand{\pig}{\perm\in\group}
\newcommand{\sym}[1]{\mathcal{S}_{#1}}
\newcommand{\stab}[2]{\mathrm{stab}(#1,#2)}
\newcommand{\orbit}[2]{\mathrm{orb}(#1,#2)}
\newcommand{\inv}[1]{{#1}^{-1}}
\newcommand{\st}{\,:\,}
\newcommand{\T}{^\top}
\newcommand{\sprod}[2]{{#1}\T{#2}}
\newcommand{\define}{\coloneqq}
\newcommand{\card}[1]{|#1|}
\DeclareMathOperator{\argmax}{argmax}
\DeclareMathOperator{\suc}{succ}
\newcommand{\tpgraph}{TP-graph\xspace}
\newcommand{\tpgraphs}{TP-graphs\xspace}
\theoremstyle{plain}
\newtheorem{theorem}{Theorem}[section]
\newtheorem{lemma}[theorem]{Lemma}
\newtheorem{proposition}[theorem]{Proposition}
\newtheorem*{claim*}{Claim}
\theoremstyle{definition}
\newtheorem{example}[theorem]{Example}
\begin{document}

\title{Schreier-Sims Cuts meet Stable Set: Preserving Problem Structure when Handling Symmetries}
\author{Christopher Hojny}
\affil{Eindhoven University of Technology, Netherlands,
  email: c.hojny@tue.nl}
\author{Marc E.\ Pfetsch}
\affil{TU Darmstadt, Germany,
  email: pfetsch@opt.tu-darmstadt.de}
\author{Jos\'e Verschae}
\affil{Pontificia Universidad Católica de Chile,
  email: jverschae@uc.cl}

\maketitle
\begin{abstract}
  Symmetry handling inequalities (SHIs) are a popular tool to handle
  symmetries in integer programming.
  Despite their successful application in practice, only little is known
  about the interaction of SHIs with optimization problems.
  In this article, we focus on SST cuts, an attractive class of SHIs, and
  investigate their computational and polyhedral consequences for
  optimization problems.
  After showing that they do not increase the computational  complexity of solving
  optimization problems, we focus on the stable set problem for which we
  derive presolving techniques based on SST cuts.
  Moreover, we derive strengthened versions of SST cuts and identify cases
  in which adding these inequalities to the stable set polytope maintains integrality.
  Preliminary computational experiments show that our techniques have
  a high potential to reduce both the size of stable set problems and the
  time to solve them.

  \textbf{Keywords:} symmetry handling, stable set, totally unimodular
\end{abstract}

\section{Introduction}

The handling of symmetries in binary programs has the goal to speed up the
solution process by avoiding the regeneration of symmetric solutions. To
fix notation, consider the binary program
$\max\,\{\sprod{c}{x} \st Ax \leq b,\; x \in \{0,1\}^n\}$, where
$A \in \Z^{m \times n}$, $b \in \Z^m$, and $c \in \Z^n$.  Let~$\sym{n}$ be the
permutation group on~$[n] \define \{1,\dots,n\}$. A permutation~$\perm$ acts
on~$x \in \R^n$ by permuting its coordinates, i.e.,
$\perm(x) \define (x_{\inv{\perm}(1)},\dots,x_{\inv{\perm}(n)})$. A
subgroup~$\group \leq \sym{n}$ is a \emph{symmetry group} of the program if
every~$\pig$ maps feasible solutions onto feasible solutions preserving
their objective values.  That is, for~$x \in \Z^n$, $A\perm(x) \leq b$ if
and only if~$Ax \leq b$, and~$\sprod{c}{x} = \sprod{c}{\perm(x)}$.

Different techniques have been suggested for symmetry handling such as
isomorphism pruning~\cite{Mar02,Mar03a,Mar03b} or adding symmetry handling
inequalities
(SHIs) \cite{Friedman2007,Hojny2020,HojnyPfetsch2019,KaibelPfetsch2008,Liberti2008,Liberti2012a};
also see Margot~\cite{Mar10} for an overview.
SHIs are systems of inequalities that turn symmetric solutions infeasible,
while keeping at least one (optimal) solution intact.

One particular way of handling symmetries is by the addition of
inequalities based on \emph{Schreier-Sims Tables} (SST). This has been proposed by
Liberti and Ostrowski~\cite{LibO14} and Salvagnin~\cite{Sal18}. The main
idea is that by iteratively computing group stabilizers, one can
handle symmetries by adding so-called \emph{SST cuts} of the form
$x_i \leq x_j$, where variable $x_i$ appears in the orbit of
variable~$x_j$, see Section~\ref{sec:SSTdefinition} for a detailed explanation.

This approach motivates our main question:
\begin{quote}
  \emph{What is the impact of adding SST
    cuts on the complexity of the underlying binary program?}
\end{quote}
Clearly,
one would hope that neither the computational nor polyhedral complexity
increases. The answer to this question is not immediate in general, since
SST cuts might change the structure of the underlying problem, in
particular, if the problem is polynomially solvable.

In this direction, we first prove in Section~\ref{sec:complexity} that
computing an optimal solution that satisfies SST cuts can be done
in polynomial time, if the underlying problem is solvable in polynomial
time. In the remaining part of the paper, we use stable set problems (and
polynomial time solvable special cases) for investigating the above
question. In Section~\ref{sec:presolving}, we elaborate on the fact that if $i$ and~$j$ are in a common clique, then the SST cut $x_i \leq x_j$
can be used to fix $x_i = 0$. Otherwise, the SST cut can sometimes
be strengthened using cliques in the orbit of $j$. Our main technical contribution is to prove that if the underlying graph is trivially perfect, i.e., a laminar
interval graph, then adding a carefully selected set of (strengthened) SST cuts and removing fixed variables
retains total unimodularity of the constraint matrix. Hence, these SST cuts do not increase the
polyhedral complexity of the problem. Interestingly, there are families of SST cuts for which total unimodularity is not preserved. In particular, this implies that different SHIs may have significant impact on the polyhedral structure of the resulting problem.
 We also study the computational impact of these inequalities in
Section~\ref{sec:experiments}.
The results indicate that the techniques of
  Section~\ref{sec:presolving} are a powerful tool to reduce graph
sizes and running times for symmetric stable set problems.

We note that related results as the ones in this paper can be obtained,
e.g., for matching or maximum flow problems.
Furthermore, Section~\ref{sec:presolving}
shows that SST cuts indeed preserve the structure of stable set
problems.  From this we derive presolving techniques that can drastically
reduce the problem size. We also find that SST cuts preserve
persistency of the edge relaxation, a helpful property exploited
in presolving.
For general independence systems, more research is need to see how our
results for stable set generalize to independence systems by
considering their conflict graph.

\section{Schreier-Sims Table Inequalities}
\label{sec:SSTdefinition}

SST cuts are SHIs derived from Schreier-Sims tables using the following algorithm.
Define the \emph{stabilizer}~$\stab{\group}{I} \define \{ \pig \st \perm(i) = i
\text{ for } i \in I\}$
of sets~$I \subseteq [n]$ and
\emph{orbits}~$\orbit{\group}{i} = \{\perm(i) \st \pig\}$ for~$i \in [n]$.
These sets can be computed in polynomial time if $\group$ is given by a set
of generators~\cite{Seress2003}. The algorithm performs the following steps, starting with~$\group' \gets \group$,
$L \gets \emptyset$:
\begin{enumerate*}[label=(\roman*)]
	\item select a \emph{leader}~$\ell \in [n] \setminus L$ and compute~$O_\ell \gets
	\orbit{\group'}{\ell}$;
	\item update~$L \gets L \cup \{\ell\}$, and~$\group' \gets
	\stab{\group'}{L}$;
	\item repeat the previous steps until~$\group'$ becomes trivial.
\end{enumerate*}

We say that each element $\ell\in L$ is a \emph{leader} and $f\in O_\ell\setminus\{\ell\}$ is a follower of $\ell$. Unless stated otherwise, we relabel the leaders such that $L=\{1,\ldots,k\}$, where $j\in L$ is the $j$th leader selected by the algorithm. One can show~\cite{LibO14,Sal18} that \emph{SST cuts}
\begin{align}
  \label{eq:SST}
  -x_{\ell} + x_{f} &\leq 0,&& \ell \in L,\; f\in O_\ell,
\end{align}
define a system of SHIs. We usually refer to a single cut by a pair $(\ell,f)$ where $\ell\in L$ and $f\in O_\ell$. Also, we define a \emph{round} as a set of SST cuts $(\ell,f)$ given by a single leader $\ell\in L$ and all its followers $f\in O_\ell$. Moreover, we denote by $S$ the set of all pairs $(\ell,f)$ for every $\ell\in L$ and $f\in O_\ell$.

A set~$S$ of SST cuts defines a \emph{symmetry
handling cone} via~\eqref{eq:SST} that we denote by~$\cC(S)$.
This cone has attained recent attention~\cite{VerschaeEtAl2021}, for
example, it has~$O(n)$ facets and defines the
closure of the set of vectors that are lexicographically maximal in
their orbits, providing the best polyhedral approximation of lexicographically
maximal vectors.
In particular, every lexicographically maximal vector in $\R^n$ satisfies the
SST cuts based on the same order of the leaders.

\section{Complexity}
\label{sec:complexity}

One drawback of symmetry handling inequalities enforcing a total
lexicographic order is that their separation problem is coNP-hard,
cf.~\cite{Hojny2018,LuksRoy2004}.
However, SST cuts are weaker, as explained at the end of the last section.
Thus, there is hope that they do not increase the computational
complexity of solving a symmetry reduced problem compared to the
original problem. This is indeed true:

\begin{theorem}
  Let~$\cX \subseteq \R^n$ and~$c \in \R^n$.
  Let~$\group \leq \sym{n}$ be a symmetry group of the
  problem~$(\cP)$ $\max\,\{\sprod{c}{x} \st x \in \cX\}$.
  Let~$S = \{(\ell,f) \st \ell\in L, f\in O_{\ell}\}$ denote a set of
  SST cuts derived from~$\group$.
  If~$(\cP)$ can be solved in~$\bigO(T(n))$ time, then an optimal solution of
  the problem $(\cP')$ $\max\,\{\sprod{c}{x} \st x \in \cX \cap \cC(S)\}$ can be found
  in~$\bigO(T(n) + \text{poly}(n))$ time.
\end{theorem}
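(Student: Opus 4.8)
The plan is to solve $(\cP)$ first and then \emph{repair} an optimal solution into one that also satisfies the SST cuts, exploiting that $\group$ permutes optimal solutions of $(\cP)$ among themselves. I would first compute an optimal $x^\ast \in \cX$ of $(\cP)$ in time $\bigO(T(n))$; the infeasible and unbounded cases transfer directly to $(\cP')$ through the same repair argument and are omitted. Since $\cX \cap \cC(S) \subseteq \cX$, the optimum of $(\cP')$ is at most that of $(\cP)$, so it suffices to produce some $\pig$ with $\perm(x^\ast) \in \cC(S)$: then $\perm(x^\ast) \in \cX$ and $\sprod{c}{\perm(x^\ast)} = \sprod{c}{x^\ast}$ witness that $\perm(x^\ast)$ is feasible for $(\cP')$ and attains the optimal value of $(\cP)$, hence is optimal for $(\cP')$.

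To construct such a $\perm$, I would work along the stabilizer chain $\group = \group^{(0)} \geq \group^{(1)} \geq \dots \geq \group^{(k)} = \{\mathrm{id}\}$ induced by the leaders, where $\group^{(j)} \define \stab{\group}{\{1,\dots,j\}}$; by construction of the SST table one has $O_\ell = \orbit{\group^{(\ell-1)}}{\ell}$. Writing $\pi \define \inv\perm$, so that $\perm(x^\ast)_i = x^\ast_{\pi(i)}$ and $\pi \in \group$ iff $\pig$, I determine $\pi$ as a product $\pi = \tau_1 \tau_2 \cdots \tau_k$ of transversal representatives $\tau_\ell \in \group^{(\ell-1)}$ of $\group^{(\ell)}$ in $\group^{(\ell-1)}$, chosen greedily in the order $\ell = 1, \dots, k$. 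At level $\ell$, with the prefix $\sigma \define \tau_1 \cdots \tau_{\ell-1}$ already fixed, I select $\tau_\ell$ so that the leader value $x^\ast_{\pi(\ell)} = x^\ast_{\sigma(\tau_\ell(\ell))}$ is as large as possible, i.e., with $\tau_\ell(\ell) \in \argmax_{g \in O_\ell} x^\ast_{\sigma(g)}$; such a $\tau_\ell$ exists since this maximizer lies in $O_\ell = \orbit{\group^{(\ell-1)}}{\ell}$.

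The key point making this correct is a multiset-invariance. Once the prefix $\sigma$ is fixed, the multiset of follower values in round $\ell$, namely $\{\, x^\ast_{\pi(f)} : f \in O_\ell \,\} = \{\, x^\ast_{\sigma(g)} : g \in O_\ell \,\}$, does not depend on the remaining factors $\tau_\ell \cdots \tau_k \in \group^{(\ell-1)}$: these permute the orbit $O_\ell$ and thus only reshuffle the entries $\sigma(g)$, $g \in O_\ell$. Therefore, choosing $\pi(\ell)$ to attain the maximum of this multiset guarantees $x^\ast_{\pi(f)} \leq x^\ast_{\pi(\ell)}$ for every follower $f \in O_\ell$, i.e., the whole round of SST cuts for leader $\ell$ holds, no matter how the lower levels are completed. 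Iterating over $\ell = 1, \dots, k$ yields $\pi \in \group$, and setting $\perm \define \inv\pi$ gives a point $\perm(x^\ast)$ satisfying every cut in $S$, hence lying in $\cC(S)$. This $\perm(x^\ast)$ is essentially the lexicographically maximal representative of the orbit in the leader order, consistent with the remark in Section~\ref{sec:SSTdefinition}.

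It remains to bound the running time. All group primitives used above — computing the orbits $O_\ell$, the stabilizers $\group^{(\ell)}$, the transversal representatives $\tau_\ell$, and evaluating the induced products on points — are polynomial in $n$ once generators of $\group$ are available, by the Schreier--Sims machinery~\cite{Seress2003}, and these are exactly the data already produced when setting up the SST table. Thus the repair step runs in $\text{poly}(n)$ and the whole procedure in $\bigO(T(n) + \text{poly}(n))$. The step I expect to be the main obstacle, and would write out most carefully, is the multiset-invariance together with its coset bookkeeping: one must verify that maximizing only the single leader coordinate at each level (rather than a full lexicographic maximization over all coordinates) already certifies every SST cut, and that the admissible choices for $\pi(\ell)$ are captured precisely by a transversal of $\group^{(\ell)}$ in $\group^{(\ell-1)}$.
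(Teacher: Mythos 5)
Your proposal is correct and follows essentially the same route as the paper: solve $(\cP)$, then greedily repair the optimal solution orbit by orbit, at each level applying a permutation from the current stabilizer that moves an $\argmax$ of the orbit's coordinate values into the leader position, and descending the stabilizer chain. Your explicit transversal/multiset-invariance bookkeeping is a more detailed write-up of the same iteration the paper sketches, and the complexity accounting via the Schreier--Sims machinery matches as well.
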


\begin{proof}
  Let~$x$ be an optimal solution of~$(\cP)$.
  We construct an optimal solution~$x'$ of~$(\cP')$ in polynomial
  time.
  Consider the first leader $\ell=1$ and let $i_1 \in \argmax\,\{x_i \st i \in O_1\}$ and~$\pig$ be such
  that~$\perm(i_1) = 1$.
  Then, $\perm(x)$ satisfies the SST cuts $-x_1+x_f\le 0$ for~$f\in O_1$.
  By replacing $\group$ by the stabilizer of~$\ell=1$ and~$x$ by~$\perm(x)$, we can iterate the procedure for the
  remaining orbits to find a point~$x' \in \orbit{\group}{x}$ that
  satisfies all SST cuts.
  Since~$x$ is optimal, $x'$ is optimal too.
  As pointwise stabilizers can be computed in polynomial
  time~\cite{Seress2003}, $x'$  can be constructed in polynomial time.
\end{proof}
Note that we assume~$\group$ to be given by a set of generators.
Computing symmetries for integer
programs is NP-hard~\cite{Mar10}, however, so-called \emph{formulation} groups can be computed relatively fast in practice, see, e.g., \cite{PfeR19}.

\section{Presolving Reductions}
\label{sec:presolving}

In the remainder of this article, we focus on whether SST cuts preserve
problem structure.  We start by investigating how the implications of SST
cuts can be used in presolving routines.  To this end, we consider
\emph{stable set problems}: For an undirected graph~$G = (V,E)$ with node
weights~$c \in \Z^V$, find a set~$I \subseteq V$ of maximal weight such
that the elements in~$I$ are pairwise non-adjacent. The corresponding
\emph{edge formulation} is
\[
  \alpha(G) \define \max\,\Big\{ \sum_{v \in V} c_v\,x_v \st x_u + x_v \leq 1\; \forall \{u,v\}
  \in E,\; {x \in \B{V}}\Big\}.
\]
Note that all inequalities in this formulation have~$\B{}$-coefficients.
Thus, adding SST cuts changes the problem structure since SST cuts
have~$\{0, \pm 1\}$-coefficients. To overcome this issue,
we want to derive an alternative stable set problem on a graph
$G' = (V', E')$ that incorporates some implications of SST cuts.

\begin{lemma}
  Let $G = (V,E)$ be an undirected graph.
  Let~$S$ be a set of SST
  cuts for~$\alpha(G)$.
  Define~$V' = V \setminus \{v \in V \st v = f \text{ and } \{\ell, f\} \in
  E \text{ for some } (\ell, f) \in S\}$ and $G' = (V', E[V'])$, the
  induced subgraph. Then, $\alpha(G) = \alpha(G')$.
\end{lemma}
\begin{proof}
  Let~$(\ell, f)$ be a leader-follower pair.  If~$x_{f} = 1$, the SST cuts
  imply~$x_{\ell} = 1$ as well. Since at most one of them is contained in a
  stable set if~$\{\ell, f\} \in E$, $x_{f}$ can be fixed to~0, which is
  captured by $G'$.
\end{proof}

This means that we remove followers from~$G = (V,E)$
that are contained in a common edge with their leaders.
We call this operation the \emph{deletion operation}.

Note that this operation does not incorporate implications of SST
cuts~$(\ell, f)$ if~$\ell$ and~$f$ are not adjacent.  To take care of this,
we modify the graph~$G$ further. The \emph{addition operation}
adds~$\{v, f\}$ for every neighbor~$v$ of~$\ell$ to~$E$.  Doing so,
setting~$x_f = 1$ forces~$x_v = 0$ for all neighbors~$v$ of~$\ell$.

\begin{proposition}
  Let~$G = (V,E)$ be an undirected graph with weights~$c \in \Z^V$.
  Let~$G' = (V', E')$ arise from~$G$ by applying deletion and addition
  operations for a set of SST cuts.
  Suppose~$c_v \neq 0$ for all~$v \in V$.
  Then, every weight maximal stable set in~$G'$ is weight maximal in~$G$
  and satisfies all SST cuts.
\end{proposition}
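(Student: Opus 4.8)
The plan is to reduce everything to the single identity $\alpha(G)=\alpha(G')$, from which both conclusions follow by short exchange arguments. Two facts are used throughout. First, since $\group$ is a symmetry group of the stable set problem, the weight vector $c$ is constant on the orbits of $\group$; in particular $c_\ell=c_f$ for every leader–follower pair $(\ell,f)$, because $f\in O_\ell=\orbit{\group'}{\ell}$ for a subgroup $\group'\le\group$. Second, since $c_v\neq 0$ for all $v$, any weight maximal stable set contains only vertices of strictly positive weight, as a negative-weight vertex could be dropped to increase the weight.

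First I would prove $\alpha(G')=\alpha(G)$. For the inequality $\alpha(G')\le\alpha(G)$, observe that every stable set of $G'$ is a stable set of $G$: we have $V'\subseteq V$, and $E'$ contains the induced edges $E[V']$, so a set with no $E'$-edge has no $E$-edge either; hence the $G'$-stable sets form a subfamily of the $G$-stable sets. For the reverse inequality, I would invoke the fact that SST cuts are SHIs to pick a weight maximal stable set $I$ of $G$ that already satisfies all SST cuts, and show that $I$ is stable in $G'$. It lies in $V'$, since a deleted vertex is a follower $f$ with $\{\ell,f\}\in E$, and if $f\in I$ then the cut $x_f\le x_\ell$ forces $\ell\in I$, contradicting stability. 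It also respects every added edge $\{v,f\}$, with $v$ a neighbor of $\ell$ for a non-adjacent pair $(\ell,f)$: if $v,f\in I$, then $f\in I$ forces $\ell\in I$, and $v\in N_G(\ell)$ gives the edge $\{\ell,v\}$ inside $I$, again impossible. Thus $I$ is a $G'$-stable set of weight $\alpha(G)$, yielding $\alpha(G')\ge\alpha(G)$.

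Now let $I'$ be weight maximal in $G'$, so that it has weight $\alpha(G')=\alpha(G)$. Since $I'$ is also $G$-stable by the first direction above, it is automatically weight maximal in $G$, which is the first conclusion. For the second, fix an SST cut $(\ell,f)$ and suppose $f\in I'$. Then $c_f>0$, hence $c_\ell=c_f>0$; and $\{\ell,f\}\notin E$, since otherwise $f$ would have been deleted and could not lie in $I'$. The added edges $\{v,f\}$ for $v\in N_G(\ell)$ guarantee $N_G(\ell)\cap I'=\emptyset$. If $\ell\notin I'$, then $I'\cup\{\ell\}$ is stable in $G$ of weight $\alpha(G)+c_\ell>\alpha(G)$, contradicting the maximality defining $\alpha(G)$. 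Therefore $\ell\in I'$ and the cut holds.

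The delicate point—and the step I expect to need the most care—is this last one: a priori an arbitrary weight maximal stable set of $G'$ need not satisfy the cuts, and the addition operation only forbids the \emph{original} neighbors of $\ell$ from accompanying $f$, while the various addition operations may also create new $G'$-edges at $\ell$. The key to sidestepping this is to take the improving-vertex contradiction in $G$ rather than in $G'$: once $\alpha(G')=\alpha(G)$ is established, it suffices that $I'\cup\{\ell\}$ be stable in $G$, so any spurious new $G'$-edges incident to $\ell$ are irrelevant. Consequently the genuine workhorse is the identity $\alpha(G')=\alpha(G)$, and in particular its lower-bound direction, where the SHI property and the leader–follower implications enter.
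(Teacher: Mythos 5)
Your proof is correct and follows essentially the same approach as the paper's: the SHI property guarantees that one optimal stable set survives the two operations (hence $\alpha(G)=\alpha(G')$), the added edges clear $N_G(\ell)$ so that $\ell$ can be exchanged into the solution, and $c_\ell=c_f\neq 0$ by symmetry then forces $\ell$ into any weight-maximal set containing $f$. Your write-up is in fact somewhat more careful than the paper's own sketch, since it explicitly handles \emph{every} weight-maximal stable set of $G'$ via the identity $\alpha(G)=\alpha(G')$ rather than only arguing that some optimum is preserved.
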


\begin{proof}
  Since the deletion operation incorporates implications of SST cuts
  into~$G'$, it cannot remove all optimal solutions.  The missing
  implications of SST cuts are that setting~$x_f = 1$ for a follower~$f$
  implies~$x_\ell = 1$ for the corresponding leader~$\ell$.  If~$x_f = 1$,
  then the edges introduced by the addition operation cause~$x_v = 0$ for
  all neighbors~$v$ of~$\ell$.  Hence, if~$c_\ell > 0$, $x_\ell = 1$ in an
  optimal solution if~$x_f = 1$.  Moreover, if~$c_\ell < 0$, then $x_f$ is
  not set to 1 in an optimal solution, since~$c_f = c_\ell < 0$
  because~$\ell$ and~$f$ are symmetric.  Finally, note that
  setting~$x_v = 1$ for some neighbor~$v$ of~$\ell$ causes~$x_f = 0$
  and~$x_\ell = 0$.  Thus, exactly the implications of SST cuts are
  incorporated by the deletion and addition operation, which keeps at least
  one optimal solution intact.
\end{proof}

The previous result has an important implication for the edge formulation:
SST cuts preserve persistency.  Persistency is an important property, which
says that if an optimal solution of the LP relaxation of the edge
formulation has an integral coordinate, there exists an optimal
\emph{integral} solution of the stable set problem with the same integral
coordinate~\cite{NemhauserTrotter1975}.  This property can be used as a
presolving routine for stable set problems to remove some nodes and edges.
On top of this, the deletion and addition operations can be used as another
symmetry-based presolving routine, \emph{SST presolving}.  While the
deletion operation decreases the problem size, which is expected to have a
positive impact on solving time, the addition operation introduces new
edges.  Since these edges handle symmetries, one might expect
that the addition operation has a positive impact on solving time, which is
confirmed computationally in Section~\ref{sec:experiments}.

\section{Strengthened SST Cuts}
\label{sec:cuts}

The edge formulation is known to provide a poor LP-bound on the true
weight of a maximum stable set.
To strengthen this formulation, many cutting
planes such as odd cycle or odd wheel inequalities have been
derived~\cite{Padberg1973}.
One of the most important classes of inequalities, however, are \emph{clique
  inequalities}~$\sum_{v \in C} x_v \leq 1$ for all cliques $C \subseteq V$ in~$G$.
These inequalities define facets of the \emph{stable set polytope}~$P(G)$ if and
only if~$C$ is an inclusionwise maximal clique~\cite{GrotschelEtAl1981}.
The aim of this section is to investigate the following strengthening of SST cuts:

\begin{lemma}
  Let~$G = (V,E)$ be an undirected graph and
  let~$(\ell,f)$ for $f\in O_{\ell}$ be SST cuts derived for~$P(G)$.  Then,
  the following
  \emph{SST clique cut} is an SHI for every clique~$C \subseteq O_{\ell}$:
  \begin{equation}\label{eq:SSTclique}
    -x_\ell + \sum_{f \in C} x_f \leq 0.
  \end{equation}
\end{lemma}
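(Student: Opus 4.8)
The plan is to show that every orbit of a stable set contains a representative satisfying inequality~\eqref{eq:SSTclique}; together with the fact that~$\group$ preserves objective values, this establishes the SHI property. The natural candidate for such a representative is exactly the point produced by the standard SST reordering, the one constructed in the proof of the theorem in Section~\ref{sec:complexity}. So the first step is to invoke that construction: given any stable set~$x$, one obtains a point~$x' \in \orbit{\group}{x}$ satisfying all ordinary SST cuts~\eqref{eq:SST}, i.e., $-x'_\ell + x'_f \leq 0$ for every leader~$\ell$ and every follower~$f \in O_\ell$. Since~$\group$ is a symmetry group of the stable set problem, $x'$ is again (the incidence vector of) a stable set.

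Next I would argue that this single~$x'$ already satisfies the strengthened inequality, so no separate representative is needed. Fix a leader~$\ell$ and a clique~$C \subseteq O_\ell$. Because~$x'$ is a stable set and~$C$ is a clique, at most one node of~$C$ is selected, giving~$\sum_{f \in C} x'_f \leq 1$. The key case distinction is on the binary value~$x'_\ell$: if~$x'_\ell = 1$, then the right-hand side of~\eqref{eq:SSTclique} already dominates the left-hand side; if~$x'_\ell = 0$, then the ordinary SST cuts force~$x'_f \leq x'_\ell = 0$, hence~$x'_f = 0$, for every~$f \in C$, so both sides vanish. In either case~$-x'_\ell + \sum_{f \in C} x'_f \leq 0$, and since the argument is uniform over leaders and cliques, the same~$x'$ satisfies all SST clique cuts simultaneously.

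Finally, I would close the SHI argument. As~$x'$ lies in the orbit of~$x$ and~$\group$ preserves objective values, $\sprod{c}{x'} = \sprod{c}{x}$, so if~$x$ is an optimal stable set then so is~$x'$. Thus adding~\eqref{eq:SSTclique} retains at least one optimal solution in each orbit while rendering the remaining symmetric solutions infeasible, which is precisely the defining property of an SHI.

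The main point to appreciate is conceptual rather than computational: the clique cut is \emph{implied} by the ordinary SST cuts once stable-set feasibility and the clique structure of~$C$ are taken into account, so the orbit representative already guaranteed by the ordinary SST machinery can be reused verbatim. The only steps requiring care are verifying that the chosen~$x'$ is genuinely feasible (a stable set), which follows from~$\group$ being a symmetry group, and the clean case split on~$x'_\ell$; both are routine once the right representative is identified.
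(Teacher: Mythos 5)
Your proposal is correct and rests on the same core observation as the paper's proof: for a binary stable-set vector, the clique inequality on $C$ together with the ordinary SST cuts already implies $-x_\ell + \sum_{f \in C} x_f \leq 0$, so the orbit representative guaranteed by the ordinary SST machinery works verbatim. The paper phrases the case split via whether some $x_f = 1$ (forcing $x_\ell = 1$) rather than via the value of $x_\ell$, and leaves the orbit-representative scaffolding implicit, but the argument is the same.
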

\begin{proof}
  If~$x_f = 1$ for some~$f \in C$, the SST cuts imply~$x_\ell = 1$.
  Since~$C$ forms a clique, at most one follower~$f$ can have~$x_f = 1$,
  concluding the proof.
\end{proof}

Note that SST clique cuts generalize SST cuts, because a single follower
always defines a clique.

One can show for a single round of SST cuts~$S$ that an SST clique cut defines a facet of~$P(G,S) \define \conv\{ x
\in \B{V} \st x \in P(G) \cap \cC(S)\}$, the
\emph{symmetry-reduced stable set polytope}, if the clique is maximal in~$O_{\ell}$ and no~$f \in C$ is adjacent with~$\ell$.
Moreover, SST clique cuts are applicable to general independence systems by
defining them based on the conflict graph~$\bar{G}$.

 Since SST clique cuts are based on
cliques, we restrict our investigation to graphs~$G$ for which~$P(G)$ is
completely described by clique and non-negativity inequalities: perfect
graphs~\cite{Golumbic2004}.  In general, adding SST clique cuts does not
provide a complete description of~$P(G,S)$, e.g., if~$G$ is a $4$-cycle and~$S$
contains all possible SST cuts.  Therefore,
we restrict ourselves to perfect graphs~$G$ such that~$P(G)$ is described
by a totally unimodular constraint matrix: interval
graphs~\cite{Golumbic2004}.

An undirected graph~$G = (V,E)$ is called \emph{interval graph} if, for
all~$v \in V$, there is a real interval~$I_v$ such that, for all
distinct~$u$, $v \in V$, we have~$\{u,v\} \in E$ if and only if~$I_u \cap I_v
\neq \emptyset$.
A graph is called \emph{trivially perfect (TP)} if it is an interval graph whose
interval representation can be chosen to be laminar, i.e., if the intervals intersect, one is contained in the other.
Let~$\cC = \cC(G)$ be the set of maximal cliques of the undirected
graph~$G$.
Then, the \emph{clique matrix} $M(G)$ of~$G$ is the~$\cC \times V$-dimensional
clique-node incidence matrix of~$G$.
The clique matrix of interval graphs, and thus trivially perfect graphs, is
totally unimodular~\cite{Golumbic2004}. For a given graph $G$, let $\group_G$ be its automorphism group. Similarly, $\group$ denotes the symmetry group of the stable set program for graph $G$ as defined in Section~\ref{sec:presolving}. Note that $\group$ is a subgroup of $\group_G$ whose permutations also preserve node weights.

W.l.o.g.\ we assume that all intervals in the interval
representation~$(I_v)_{v \in V}$ of a \tpgraph are pairwise different.
We derive a \emph{rooted forest representation}~$T_G = (V,A)$ for a \tpgraph~$G= (V,E)$, where~$(u,v) \in A$ if and only if~$I_v
\subsetneq I_u$ and there is no~$w \in W$ with~$I_v \subsetneq I_w
\subsetneq I_u$.

One natural question is whether adding SST clique cuts to a complete
description of $P(G)$ provides a complete description of $P(G,S)$ if $G$ is
a \tpgraph.  In the remainder of this section, we give an answer by
providing a sufficient criterion on when adding SST clique cuts preserves total unimodularity of
the clique matrix of \tpgraphs. Moreover, for
\tpgraphs, the number of maximal cliques is linear in the number of nodes.  Picking
up our motivational question from the introduction, this shows that there
is a polynomial sized complete linear description of $P(G,S)$ and thus the
polyhedral complexity is not increased.

To derive our sufficient criterion, we introduce the notion of stringent SST cuts.
Let~$L = \{1,\dots,k\}$ be the leaders of a family of SST cuts~$S$.
Note that the orbits~$O_1,\dots,O_k$ define a laminar family.
For~$\ell \in L$, let~$\mathcal{M}(\ell)\subseteq \{O_1,\dots,O_{\ell}\}$ be the collection of inclusionwise maximal sets in $\{O_1,\dots,O_{\ell}\}$. 
The family~$S$ of SST cuts is called \emph{stringent} if the orbit~$O_\ell$
of leader~$\ell \in L$ is computed using the group~$\stab{\group}{[\ell-1]}
\cap \stab{\group}{\cO_\ell}$, where~$\cO_\ell = \bigcup_{O\in \mathcal{M}(\ell-1):\ell \notin O} O$, that is, the group must also stabilize all maximal orbits not containing $\ell$.
  
That is, stringent SST cuts not only require to stabilize previous
leaders, but also entire orbits if they do not contain the current
leader.
\begin{example}
  \label{ex:stringency}
  Figure~\ref{fig:stringent} shows the tree representation of a \tpgraph.
  The set of SST cuts for orbits~$O_1 = \{1,\dots,6\}$ and~$O_7 =
  \{7,\dots,14\}$ with leaders~$L=\{1,7\}$ (without relabeling) is not
  stringent, because~$7 \notin O_1$.
  Hence, $O_1$ needs to be stabilized, which reduces~$O_7$ to~$\{7,8\}$ for
  stringent SST cuts.
  Another example of stringent SST cuts is given by
  the leaders $1$, $3$, $5$ (in that order) and orbits~$O_1 = \{1,\dots,6\}$, $O_3 = \{3,\dots,6\}$, $O_5 = \{5,6\}$, because~$3$, $5 \in O_1$.
\end{example}
\begin{figure}[tb]
  \begin{minipage}[t]{0.5\linewidth}
    \centering
    \begin{tikzpicture}
      \tikzstyle{s} += [circle,draw=black,inner sep=0pt,minimum size=2mm];
      \node (1) at (0,0) [s] {};
      \node (2) at (.5,0) [s] {};
      \node (3) at (1,0) [s] {};
      \node (4) at (1.5,0) [s] {};
      \node (5) at (2,0) [s,label=above:7] {};
      \node (6) at (2.5,0) [s,label=above:8] {};
      \node (7) at (3,0) [s,label=above:9] {};
      \node (8) at (3.5,0) [s,label=above:10] {};
      \node (9) at (4,0) [s,label=above:11] {};
      \node (10) at (4.5,0) [s,label=above:12] {};
      \node (11) at (5,0) [s,label=above:13] {};
      \node (12) at (5.5,0) [s,label=above:14] {};

      \node (13) at (.25,-1) [s,label=left:1] {};
      \node (14) at (1.25,-1) [s,label=left:2] {};
      \node (15) at (2.25,-1) [s,label=left:3] {};
      \node (16) at (3.25,-1) [s,label=left:4] {};
      \node (17) at (4.25,-1) [s,label=left:5] {};
      \node (18) at (5.25,-1) [s,label=left:6] {};

      \node (19) at (0.75,-2) [s] {};
      \node (20) at (2.75,-2) [s] {};
      \node (21) at (4.75,-2) [s] {};

      \node (22) at (2.75,-3) [s] {};

      \draw[->] (13) -- (2);
      \draw[->] (14) -- (4);
      \draw[->] (15) -- (6);
      \draw[->] (16) -- (8);
      \draw[->] (17) -- (10);
      \draw[->] (18) -- (12);
      \draw[<-] (1) -- (13);
      \draw[<-] (3) -- (14);
      \draw[<-] (5) -- (15);
      \draw[<-] (7) -- (16);
      \draw[<-] (9) -- (17);
      \draw[<-] (11) -- (18);

      \draw[->] (19) -- (14);
      \draw[->] (20) -- (16);
      \draw[->] (21) -- (18);
      \draw[<-] (13) -- (19);
      \draw[<-] (15) -- (20);
      \draw[<-] (17) -- (21);

      \draw[<-] (19) -- (22);
      \draw[<-] (20) -- (22);
      \draw[<-] (21) -- (22);
    \end{tikzpicture}
      \captionof{figure}{Example for (non-) stringent SST cuts.}
      \label{fig:stringent}
  \end{minipage}
  \hfill
  \begin{minipage}[t]{0.4\linewidth}
    \centering
    \begin{tikzpicture}
      \tikzstyle{s} += [circle,draw=black,inner sep=0pt,minimum size=2mm];
      \node (1) at (0,0) [s] {};
      \node (2) at (2,0) [s] {};
      \node (3) at (1,1) [s] {};
      \node (4) at (0,1) [s,label=left:$\ell$] {};
      \node (5) at (2,1) [s,label=right:$f$] {};
      \draw[-] (1) -- (2) -- (3) -- (1);
      \draw[-] (1) -- (4);
      \draw[-] (2) -- (5);
      \node (fake) at (1,-1) {};
    \end{tikzpicture}
    \captionof{figure}{An interval graph and SST cut $(\ell, f)$.}
    \label{fig:interval}
  \end{minipage}
\end{figure}

Although stringency seems to be restrictive, we can implement the algorithm in Section~\ref{sec:SSTdefinition} so that it always generates
stringent SST cuts. Indeed, we can maintain the following property: if in Step~(i) a given leader~$\ell$ is selected, then the following leaders need to be selected from~$O_{\ell}$ first.
Once all elements of~$O_{\ell}$ have been considered as leaders, the
group $\stab{\group}{[\ell]}$ stabilizes~$O_{\ell}$, and we can continue with a next leader $\ell' \in [n] \setminus O_{\ell}$. Hence, we obtain stringent SST cuts by choosing leaders in a depth-first search fashion.

We are now able to formulate the main result of this section.

\begin{theorem}
  \label{thm:SSTclique}
  Let~$G = (V,E)$ be a \tpgraph.
  Consider a set~$S$ of stringent SST cuts.
  The matrix that arises by applying the following two operations is
  totally unimodular:
  \begin{enumerate}[topsep=0ex]
  \item adding all SST clique cuts derivable from~$S$;
  \item deleting columns whose nodes get deleted by the deletion
    operation.
  \end{enumerate}
\end{theorem}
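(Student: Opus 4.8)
My plan is to work with the rooted-forest representation~$T_G$ and to use the fact that the maximal cliques of a \tpgraph are exactly its root-to-leaf paths. Consequently the clique matrix~$M(G)$ is a network matrix and is totally unimodular, which serves as the base object. Each SST clique cut contributes a row with a single~$-1$ in the leader column~$\ell$ and~$+1$ entries on a clique~$C\subseteq O_\ell$; since cliques in~$G$ are chains in the laminar family, $C$ is a chain (an ancestor-descendant path) in~$T_G$. Writing~$B$ for the matrix obtained by stacking the clique rows of~$G$ and all SST clique-cut rows and then restricting to the surviving columns~$V'$, the task is to show that~$B$ is totally unimodular. I would do this via the Ghouila-Houri criterion: for every subset~$R$ of rows of~$B$ I must exhibit a signing~$\varepsilon\in\{\pm1\}^{R}$ such that every column of~$\sum_{r\in R}\varepsilon_r r$ has entry in~$\{-1,0,1\}$.

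The signing is built from two ingredients tied to the two laminar structures. First, I sign the selected clique rows (equivalently, the corresponding leaves of~$T_G$) by a two-coloring that is \emph{subtree-balanced}: inside the subtree of every node of~$T_G$ the signed number of chosen leaves lies in~$\{-1,0,1\}$. Such a coloring always exists by a greedy pairing in a bottom-up pass, and it forces every ``internal'' column (a non-orbit vertex) to receive a balanced contribution from the clique rows. Second, I sign the selected SST clique-cut rows by processing the leaders in the stringent depth-first order of Section~\ref{sec:cuts}: because stringent cuts satisfy~$O_\ell\subseteq\cO$ for the maximal earlier orbit~$\cO$ containing~$\ell$, the orbits are nested along a single branch, so the followers that a leader can collect come only from within its own branch and from orbits comparable to~$O_\ell$. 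This lets me orient each cut row so that the~$+1$ on a follower cancels against a clique row covering the same leaf, while the~$-1$'s accumulating in a leader column remain bounded.

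The \textbf{main obstacle} is exactly the simultaneous control of the \emph{leader} and \emph{follower} columns, and this is where stringency and the deletion operation do the work. The deletion operation removes every follower~$f$ with~$\{\ell,f\}\in E$, i.e.\ every follower nested with its own leader; after this removal no SST clique-cut row has a~$+1$ on a strict descendant of its leader, which prevents a leader column from being loaded by both a clique row through~$\ell$ and a cut row whose follower lies below~$\ell$. Combined with stringency - which guarantees that a leader does not simultaneously gather followers from several incomparable branches nor from two incomparable orbits - one shows that each column sum of~$\sum_{r\in R}\varepsilon_r r$ stays in~$\{-1,0,1\}$. I expect the delicate case to be a chain~$C=\{f_1,\dots,f_m\}$ of several symmetric followers together with overlapping clique rows, where the subtree-balanced coloring of the clique part and the branch-wise orientation of the cut part must be reconciled on every prefix of the chain.

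Finally, the deletion step of the theorem is literally a column deletion of~$B$, under which total unimodularity is preserved, so it suffices to verify the criterion on~$V'$. I would close the argument by contrasting with non-stringent families: there a single leader may accumulate~$+1$ followers from two incomparable branches (or from nested orbits at different levels) that no clique row cancels, yielding a column of absolute sum~$2$ and hence a forbidden~$2\times2$ minor. This simultaneously explains why stringency is necessary and matches the remark that some SST cut families do not preserve total unimodularity of the clique matrix.
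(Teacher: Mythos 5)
Your setup matches the paper's: both work in the rooted-forest representation where maximal cliques are root-to-leaf paths, and both verify total unimodularity via Ghouila-Houri by two-coloring a chosen set of clique rows and then signing the cut rows. However, there is a genuine gap at the heart of your argument, namely the claim that a \emph{subtree-balanced} coloring of the selected leaves suffices to keep the leader columns bounded. Subtree balance only guarantees $\delta_v\in\{0,\pm 1\}$ for each single node~$v$. The sign of each cut row $(\ell,f)$ is essentially forced by the need to balance the follower column~$f$ whenever $\delta_f=\pm 1$, and this forced choice deposits a contribution equal to~$\delta_f$ into the leader column~$\ell$. Since after the deletion operation the followers of~$\ell$ form an \emph{antichain} in~$T_G$ (pairwise incomparable nodes), the leader column accumulates $\delta_\ell+\sum_{f}\delta_f$, and subtree balance does not control such a sum over an antichain: a node~$d$ with children $a,b,c,e$ and $\delta_a=\delta_b=+1$, $\delta_c=\delta_e=-1$ has $\delta_d=0$ yet $\delta_a+\delta_b=2$ for the orbit $\{a,b\}$. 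The paper's entire technical core (Lemmas~\ref{lem:recursiveEBC1} and~\ref{lem:recursiveEBC}, built on the recursion and laminar recursion properties that stringency provides) exists precisely to construct an equicoloring that \emph{additionally} satisfies $\sum_{v\in S}\delta_v\in\{0,\pm1\}$ for every orbit~$S$, simultaneously for the whole laminar family of orbits. Your proposal invokes stringency only to argue that followers come from a single branch, which is neither quite what stringency gives nor sufficient to bound the leader column; you would need to formulate and prove an analogue of these recursive equicoloring lemmas.

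A second, smaller omission: you never actually dispose of the rows with several $+1$ entries. An SST clique cut places $+1$ on a whole clique $C\subseteq O_\ell$ of followers, and a single sign for that row cannot balance each follower column independently; you flag this as ``the delicate case'' but do not resolve it. The paper handles it first, by a separate reduction (Lemma~\ref{lem:reduction}): the chain decomposition of orbits (Lemma~\ref{lem:chainDecomp}) shows that follower columns within a chain are identical in the extended matrix, so any square submatrix containing two of them is singular and the problem reduces to simple SST cuts on a chain-contracted graph~$G_S$. Incorporating that reduction, and then replacing your subtree-balanced coloring by an orbit-aware recursive equicoloring, would close the argument along the paper's lines.
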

In general, this theorem does not hold if we drop stringency, because
experiments with the code from~\cite{WalterT13} show that the
non-stringent SST (clique) cuts from Example~\ref{fig:stringent} do not
preserve total unimodularity when adding them to the clique matrix of the
corresponding \tpgraph.
Moreover, since SST clique cuts dominate SST cuts, it is necessary to
replace SST cuts by clique cuts.
Also, the requirement of \tpgraphs and to apply the deletion operation
are necessary for the validity of the theorem: Figure~\ref{fig:interval} shows
an interval graph that is not TP and an SST cut such that the extended
clique matrix is not totally unimodular; if there is an edge~$\{\ell, f\}$
in~$G$ for an SST cut $(\ell, f)$, then the extended clique matrix contains
a $2 \times 2$-submatrix with rows~$[1,1]$ and~$[-1,1]$, i.e., with
determinant~2.

To prove Theorem~\ref{thm:SSTclique}, we proceed in two steps.
We reduce the case of SST clique cuts to SST cuts, and then show that the
result holds for this simple case.

\subsection{Reduction to a Simple Case}

We exploit the symmetry group structure of \tpgraphs to reduce the
discussion of SST clique cuts to SST cuts.
Consider a \tpgraph~$G = (V,E)$ with automorphism group~$\group_G$ and forest
representation~$T$.
A \emph{chain} in~$T$ is a directed path~$c$ with terminal node~$u$
such that the out-degree $\delta^+_T(v)$ with respect to $T$ equals~1, for every node $v$ of $c
\setminus\{u\}$.

\begin{lemma}
\label{lem:chainDecomp}
  Let~$G = (V,E)$ be a \tpgraph.
  For any node~$v \in V$, the induced subgraph of $T_G$ in $\orbit{\group_G}{v}$ decomposes into chains of
  the same length and~$\group_G$ acts independently on each chain like the
  symmetric group.
\end{lemma}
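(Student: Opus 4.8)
The plan is to work entirely in the forest $T_G$ and to isolate the combinatorial structure that $\group_G$ must preserve. The starting point is the defining property of a \tpgraph: since the intervals are laminar, two distinct nodes $u,v$ are adjacent in $G$ exactly when one interval is strictly contained in the other, i.e.\ exactly when $u$ and $v$ are comparable in $T_G$ (one an ancestor of the other). Thus $G$ is the comparability graph of $T_G$, and the closed neighbourhood $N[v]$ consists of $v$ together with all its ancestors and descendants in $T_G$; in particular $\deg_G(w)$ equals (number of ancestors of $w$) $+$ (number of descendants of $w$). Automorphisms need not preserve $T_G$ arc by arc (a path of single-child nodes induces a clique on which all nodes are interchangeable), so I would not argue that $\group_G$ acts on $T_G$ directly, but instead pin down an invariant that it does preserve.

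The key step is to show that the \emph{true-twin classes} of $G$, i.e.\ the classes of the relation $u\equiv v$ given by $N[u]=N[v]$, coincide with the inclusion-maximal chains of $T_G$ (maximal directed paths whose non-terminal nodes have out-degree $1$). For one inclusion, any two nodes on a common maximal chain are mutually comparable, have the same ancestors above the chain and the same subtree below it, hence share the same closed neighbourhood. For the converse, if $u\equiv v$ then $u,v$ are comparable, say $u$ an ancestor of $v$; a branching strictly between them would yield a node comparable to $u$ but not to $v$, contradicting $N[u]=N[v]$, so the $u$--$v$ path has only single-child interior nodes and $u,v$ lie on one maximal chain. Since $\equiv$ is preserved by automorphisms, $\group_G$ permutes maximal chains; and for a fixed maximal chain $K$ any transposition of two of its nodes fixing $V\setminus K$ preserves adjacency (the two nodes have equal closed neighbourhoods) and is therefore an automorphism. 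As transpositions generate $\mathrm{Sym}(K)$, the group $\group_G$ contains $\mathrm{Sym}(K)$ acting on $K$ and fixing its complement, which already gives the ``acts independently on each chain like the symmetric group'' part.

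From here I would assemble the statement. Because $\group_G$ is transitive on every maximal chain, the orbit $\orbit{\group_G}{v}$ is a union of \emph{complete} maximal chains $\gamma(K)$ over $\gamma\in\group_G$, where $K$ is the maximal chain of $v$; being images of $K$ under bijections, these chains all have the same length $m=\card{K}$. It remains to rule out arcs of $T_G$ between two distinct such chains, so that the induced subgraph is a \emph{disjoint} union of chains. Any arc of $T_G$ joining different maximal chains runs from the bottom node $x$ of one chain to the top node $y$ of a child chain, and $x$ has out-degree at least $2$ (otherwise its unique child would lie in the same chain). A degree count then gives a contradiction: since $x$ is the parent of $y$, the node $x$ has one ancestor fewer but, as it branches, at least two descendants more than $y$, so $\deg_G(x)>\deg_G(y)$, which is impossible when $x$ and $y$ share an orbit. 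Hence no such arc exists, and the induced subgraph of $T_G$ on $\orbit{\group_G}{v}$ is precisely the disjoint union of the equal-length chains $\gamma(K)$.

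I expect the main obstacle to be the chain/true-twin identification (specifically making the ``no branching in between'' direction precise) together with the concluding degree count ruling out inter-chain arcs; the latter is what upgrades ``union of chains'' to a genuinely \emph{disjoint} chain decomposition, and it is exactly where the hypothesis that all nodes lie in one orbit is used. The remaining ingredients---the comparability description of adjacency and the symmetric-group action via transpositions---are routine once this invariant has been identified.
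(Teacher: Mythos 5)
Your proposal is correct and follows essentially the same route as the paper's own proof: the nodes of a chain are pairwise interchangeable (which you make precise via the true-twin characterization $N[u]=N[v]$, yielding the symmetric-group action), and a degree count in $G$ shows that two comparable nodes separated by a branching node cannot lie in a common orbit, which forces the orbit to split into disjoint chains of equal length. The paper's proof is a terser version of exactly these two steps, so no further comparison is needed.
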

\begin{proof}
  The nodes~$w$ in a chain~$c$ are pairwise interchangeable as exchanging
  their corresponding intervals~$I_w$ does not change the adjacency structure.
  Therefore, $\group_G$ acts on~$c$ as the symmetric group, while fixing the
  remaining nodes of~$G$.
  Moreover, if a path~$c$ is not a chain, then there exist two distinct nodes~$u$
  and~$v$ in~$c$ with out-degree at least~2.
  If~$v$ appears before~$u$ in~$c$, the degree of~$v$ in~$G$
  is larger than the degree of~$u$.
  Hence, they cannot be symmetric.
  Therefore, for any~$v \in V$, $\orbit{\group_G}{v}$ decomposes into
  chains.
  They need to have the same length because the corresponding paths need to
  be symmetric.
\end{proof}
When applying SST cuts to the stable set problem, we are using a subgroup \mbox{$\group \le \group_G$} that also preserves node weights.
In this case, we can sort the nodes along each chain consecutively by
their node weights because~$\group_G$ acts like the symmetric
group on each chain. That is, Lemma~\ref{lem:chainDecomp} also holds for the subgroup~$\group$.
We use this observation to define an auxiliary graph~$G_S$ for a
family of SST cuts~$S$.
Whenever we compute an orbit~$O_\ell$, we also compute its chain
decomposition according to the current stabilizer group.
After computing all decompositions, $G_S$ arises from~$G$ as follows.
Each chain computed in the decomposition of the orbits is contracted into a
single node.
If the chain contains a leader, then we give the contracted node the lowest
label of a leader within the chain.
Otherwise, we give the contracted node an arbitrary label within the chain.

The interpretation of this graph is as follows.
If a chain contains a leader, the deletion operation allows
us to remove all nodes except for the leader from the graph.
If a chain contains several leaders, it is only necessary to keep the leader
considered first.
For a chain~$c$ that does not contain a leader, the columns for~$v \in c$ of
the clique-node adjacency matrix are identical.
This is true, because we never compute subchains of already considered
chains, because the symmetry group acts independently like the symmetric
group on each chain, i.e., we can always exchange all nodes within a chain if
none of them is stabilized.
They are in particular still identical if we add SST clique cuts to the
matrix, because chains in~$T$ correspond to cliques in~$G$.
Hence, for deciding total unimodularity, we can remove symmetric columns.

We can now reduce Theorem~\ref{thm:SSTclique} to the case of simple SST
cuts by applying the following lemma.

\begin{lemma}
  \label{lem:reduction}
  Let~$G = (V,E)$ be a \tpgraph and let~$S$ be a set of SST clique
  cuts.
  Then, the matrix obtained by
  \begin{enumerate}[topsep=0ex]
  \item adding SST clique cuts for~$S$ to the clique matrix~$M(G)$ and
  \item deleting columns contained in SST cuts for~$S$ such that the
    corresponding leader and follower are adjacent,
  \end{enumerate}
  is totally unimodular if and only if the matrix $A_S$ obtained by extending $M(G_S)$
  with the simple SST cuts corresponding to~$S$ in~$G_S$ is totally
  unimodular.
\end{lemma}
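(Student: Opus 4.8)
The plan is to show that the two matrices in the statement are related by operations that preserve total unimodularity, namely column duplication/deletion and column/row permutation. The key observation from the surrounding discussion is already handed to us: the auxiliary graph $G_S$ is obtained from $G$ by contracting each chain of the chain decomposition into a single node, and within each chain the columns of the clique matrix $M(G)$ are identical (because chains correspond to cliques in the laminar structure, and the symmetry group acts like the symmetric group on each chain). Total unimodularity is invariant under deleting or duplicating a column, so passing between $M(G)$ and $M(G_S)$ on the clique-matrix part is harmless.

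First I would set up the correspondence precisely. Let the chains of the decomposition be $c_1,\dots,c_r$, and let $G_S$ have one node per chain. I would argue that the clique matrix $M(G)$, restricted to the columns of a single chain $c$, has all columns equal, so $M(G)$ is obtained from $M(G_S)$ by replacing each column by $|c|$ copies; this preserves total unimodularity in both directions. Then I would handle the SST clique cut rows. For a leader $\ell$ with orbit $O_\ell$ and a clique cut $-x_\ell + \sum_{f\in C} x_f \le 0$ with $C\subseteq O_\ell$, the clique $C$ is exactly a set of chain-representatives (a subchain in $T$), so after contraction this row collapses precisely to a simple SST cut row $-x_\ell + x_f \le 0$ in $G_S$, where $f$ is the contracted follower. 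This is the step where the assumption that cliques correspond to chains is used, and it is what lets clique cuts become simple cuts under contraction.

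Next I would reconcile the two deletion operations. On the $G$ side, operation~(2) deletes a follower column $f$ exactly when $\{\ell,f\}\in E$; on the $G_S$ side, the corresponding deletion is inherited, since a follower adjacent to its leader lies in the same clique and hence in the same chain after contraction. I would verify that the set of surviving columns on each side matches up under the contraction map, so that the extended matrix on the $G$ side and the matrix $A_S$ on the $G_S$ side differ only by column duplication among the retained (non-deleted) chains. Since duplicating columns and permuting rows/columns preserve total unimodularity, the two matrices are totally unimodular together or not at all.

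\textbf{The main obstacle} I anticipate is the bookkeeping at the boundary between the clique-matrix rows and the SST-clique-cut rows: I must check that a column's being duplicated (because it sits in a chain) is consistent across both kinds of rows simultaneously. Concretely, I need that the SST-clique-cut rows also have identical entries on the duplicated columns within a chain, so that the whole extended matrix — not just its $M(G)$ block — is a genuine column-duplication of $A_S$. This holds because SST clique cuts are built from cliques $C\subseteq O_\ell$, which are themselves chains, so the coefficient of a follower column in a clique-cut row depends only on the chain it belongs to; I would make this compatibility explicit. The remaining ingredient, that column duplication preserves total unimodularity, is a standard fact (a repeated column can be removed without changing the maximal absolute value of any subdeterminant), so I would invoke it rather than reprove it.
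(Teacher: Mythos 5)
Your proposal is correct and follows essentially the same route as the paper: both hinge on the observation that, after the deletion operation, the surviving columns of a chain are identical across the clique rows \emph{and} the SST-clique-cut rows, so the extended matrix for~$G$ and~$A_S$ differ only by duplicating columns of leaderless chains. The paper merely inlines the standard fact you cite (it picks an arbitrary square submatrix, gets determinant~$0$ from two identical chain columns, and otherwise reduces to a submatrix of~$A_S$ via cofactor expansion along a cut row), so your invocation of ``column duplication preserves total unimodularity'' is an acceptable shortcut rather than a different argument.
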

\begin{proof}
  By the previous discussion, the matrix~$A_S$ is a
  submatrix of the extended clique matrix~$A$ of~$G$.
  Thus, if~$A$ is totally unimodular, so is~$A_S$.

  For the other direction, assume~$A_S$ is totally unimodular.
  To see that also~$A$ is totally unimodular, select an arbitrary square
  submatrix~$B$ of~$A$.
  If~$B$ does not contain a row corresponding to an SST clique cut, $B$ is
  a submatrix of~$M(G)$, and thus totally unimodular.
  For this reason, assume~$B$ contains a row corresponding to an SST clique
  cut.
  Select an SST clique cut in~$B$ whose leader~$\ell$ has the largest
  value.
  Let~$C$ be the corresponding clique.
  If~$B$ contains two columns corresponding to nodes~$v$ and~$w$ in~$C$,
  then these columns are identical by the previous discussion.
  Consequently, $\det(B) = 0$.

  Thus, suppose~$B$ contains only one column corresponding to a
  node~$v$ in~$C$.
  If the column corresponding to~$\ell$ is not present in~$B$, we
  expand~$\det(B)$ along the row corresponding to the SST clique cut.
  Since this row contains exactly one~1-entry, we find~$\det(B) \in \{0, \pm
  1\}$ by applying the above arguments inductively.
  Therefore, we may assume that, for each selected SST clique cut
  in~$B$, there is at most one column~$v$ that contains a node from the
  corresponding clique of the SST clique cut.
  Hence, $B$ is a submatrix of~$A_S$ and~$\det(B) \in \{0, \pm
  1\}$ follows.
\end{proof}
\subsection{Proving the Simple Case}

Consequently, Theorem~\ref{thm:SSTclique} holds if the following theorem
holds.

\begin{theorem}
  \label{thm:stringentSST}
  Let~$G = (V,E)$ be a \tpgraph.
  Consider a set of stringent SST cuts for leaders~$L=[k]$ and
  orbits~$O_1,\dots,O_k$.
  If no orbit contains an edge from~$E$, then the clique matrix~$M(G)$
  extended by the simple SST cuts is totally unimodular.
\end{theorem}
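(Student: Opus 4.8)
The plan is to recast the statement in terms of the rooted forest $T_G$ and then establish total unimodularity through the Ghouila--Houri criterion, keeping a determinant induction as a fallback.

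First I would translate the data. Since $G$ is a \tpgraph, it is the comparability graph of $T_G$: two nodes are adjacent exactly when one interval contains the other, i.e.\ when they are comparable in $T_G$. Hence the maximal cliques of $G$ are precisely the maximal root-to-leaf chains of $T_G$, and $M(G)$ is their chain-node incidence matrix, which is totally unimodular. Next I would record the consequence of the hypothesis that no orbit contains an edge: by Lemma~\ref{lem:chainDecomp} each orbit $\orbit{\group}{v}$ decomposes into chains of equal length on which $\group$ acts like a symmetric group, and a chain of length at least two would contain an edge of $G$; therefore every orbit $O_\ell$ is an antichain of $T_G$, and all of its members have pairwise isomorphic subtrees and symmetric parents. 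In particular, an SST row $-e_\ell+e_f$ is the difference of the unit vectors of two incomparable, symmetric nodes.

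Then I would verify total unimodularity via Ghouila--Houri: for an arbitrary subset $R$ of rows I must sign the rows so that every column sum lies in $\{-1,0,1\}$. The signing is dictated by the two nested structures. Clique rows are signed along $T_G$ so that, for each internal node, the chains branching below it alternate in sign; this is exactly the signing that certifies total unimodularity of $M(G)$ and makes each internal column sum vanish or equal $\pm1$. SST rows are then signed by walking the laminar family of orbits: because the orbits are laminar antichains and the cuts of a fixed leader reach all of its followers, every follower column is touched by a nested family of cuts whose signs can be chosen to cancel against the single clique entry at that node, while every leader column is balanced by the cuts it heads. Stringency is what guarantees this nesting is compatible with $T_G$, so that the signs propagate consistently; I would make the bookkeeping precise by induction on the number of leaders, peeling off an innermost orbit and appealing to the inductive signing on the smaller \tpgraph obtained by deleting it.

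The main obstacle is the imprimitive case, in which a leader $\ell$ and a follower $f$ lie in the same orbit but are \emph{not} siblings in $T_G$ (as already happens for the stringent cuts $1\to 3,4,5,6$ in Example~\ref{ex:stringency}). Then the forest path joining $\ell$ and $f$ is long, so the difference $-e_\ell+e_f$ cannot be read off a single branching of the tree, the extended matrix is genuinely not a network matrix, and the signs of the intermediate clique rows must be shown to cancel. Controlling exactly these cancellations is where stringency is indispensable: dropping it yields the non-stringent cuts of Example~\ref{ex:stringency}, for which total unimodularity fails. As a robust alternative to the global signing I would run the determinant induction in the style of Lemma~\ref{lem:reduction}: take a square submatrix $B$, expand along any row or column with a single nonzero entry, reduce to the clique matrix when no SST row remains, and otherwise exhibit two symmetric followers whose columns coincide in $B$ (forcing $\det(B)=0$) because no cut present in $B$ distinguishes them, a property again secured by stringency.
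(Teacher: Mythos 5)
Your setup matches the paper's: both translate the problem to the rooted forest $T_G$, observe that the maximal cliques of $G$ are the root--leaf paths, that the hypothesis forces every orbit to be an antichain (hence a stable set), and both aim to verify total unimodularity via Ghouila--Houri. However, the heart of the argument is missing. In the Ghouila--Houri certificate, the column of a leader $u_\ell$ receives, besides its clique-row contribution $\delta_{u_\ell}$, a contribution of $-1$ or $+1$ from \emph{every} SST row $(-e_{u_\ell}+e_f)$ placed in $D^-$ or $D^+$; since the sign of the row $(u_\ell,f)$ is forced whenever $\delta_f=\pm 1$ (to keep the follower column in range), the leader column ends up with value $\sum_{w\in O_\ell}\delta_w$. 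So what one actually has to prove is that the root--leaf paths admit an equicoloring in which \emph{every orbit sum} $\sum_{w\in O_\ell}\delta_w$ lies in $\{0,\pm 1\}$ simultaneously, for the whole laminar family of orbits. This is exactly what the paper's Lemmas~\ref{lem:recursiveEBC1} and~\ref{lem:recursiveEBC} establish, by a delicate induction on the tree height and on the number of sets, using the recursion property (the reduced graphs $T_{d}(\cdot)$) that stringency provides. Your proposal asserts that ``the signs can be chosen to cancel'' and that ``stringency guarantees the nesting is compatible,'' but gives no mechanism for achieving all these orbit-sum constraints at once; a generic alternating signing of the paths controls each $\delta_v$ individually but says nothing about $\sum_{w\in O_\ell}\delta_w$. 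Identifying and proving this simultaneous-control statement is the actual content of the theorem, not bookkeeping.

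The fallback determinant induction also does not go through. Two distinct followers $f_1,f_2$ of the same leader form an antichain in $T_G$ (by hypothesis they are non-adjacent), so they lie on different root--leaf paths and their columns in $M(G)$ are \emph{not} identical --- they are only related by an automorphism permuting the cliques. The ``identical columns'' argument is valid in the paper's Lemma~\ref{lem:reduction} only because there the identified nodes lie in a common chain of $T_G$ (a clique of $G$), which is precisely the situation excluded here. After expanding along rows or columns with a single nonzero entry, one is in general left with a submatrix containing SST rows whose leader and follower columns are both present together with several clique rows, and no pair of equal columns is available; some quantitative argument (such as the equicoloring above) is then unavoidable.
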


In the remainder of this section, we prove Theorem~\ref{thm:stringentSST}.
Let~$T = T_G$ be the forest representation of a \tpgraph~$G$. We denote the set of all paths in~$T$ that connect a root node~$r$ with a leaf
by~$\cP$.
The paths in $\cP$ that contain~$v \in V$ are denoted by~$\cP_v$ and are in a
one-to-one correspondence with the cliques~$\cC_v \subseteq \cC$ in~$G$
that contain~$v$.
We call a set of nodes~$S \subseteq V$ \emph{path-disjoint} if~$\cP_u \cap
\cP_v = \emptyset$ for all distinct~$u$, $v \in S$.
Note that there is a one-to-one correspondence between path-disjoint sets
in~$T$ and stable sets in \tpgraphs~$G$.

We define a \emph{reduction operation} as follows:
For a set~$S \subseteq V$ and~$v \in S$, let~$d$ be a node on the
unique~$r$-$v$-path, where $r$ is the unique root of the connected component containing $v$ in~$T$.
If we delete~$d$ from~$T$, then~$T$ decomposes into connected components
that are rooted trees.
The \emph{reduced graph}~$T_d(S)$ is the graph defined by the connected
components whose roots are children of~$d$ and that do not contain any node
from~$S$.
We also need the following property.
A family of path-disjoint sets~$S_1,\dots,S_k \subseteq V$ has
the \emph{recursion property} if
\begin{enumerate}[label=(RP\arabic*),topsep=1ex]
\item $S_1,\dots, S_k$ are pairwise disjoint, and

\item\label{RP2} for every~$i \in [k-1]$, there exists~$d^i \in V$ such
  that~$S_{i+1} \subseteq T_{d^{i}}(\bigcup_{j = 1}^{i-1}S_j)$.
\end{enumerate}
If~$\cS = \{S_1, \dots, S_k\}$ is a laminar family of subsets of~$V$, we
define, for~$S \in \cS$, $\cS_S \define \{ S' \in \cS \st S' \subsetneq S\}$.
We say that the laminar family~$\cS$ has the \emph{laminar recursion
  property} if
\begin{enumerate}[label=(LRP\arabic*),topsep=1ex]
\item\label{LRP1} for all~$S \in \cS$, there is~$u_S \in S$ not
  contained in any set of~$\cS_S$, and
\item\label{LRP2} the inclusionwise maximal sets in~$\cS$ have the
  recursion property.
\end{enumerate}
Similarly, $\cS$ has the (laminar) recursion property with respect to a \tpgraph~$G$, if it has the same property for its tree representation $T_G$.
Using these concepts, we can prove Theorem~\ref{thm:stringentSST}.
In fact, we show a stronger result for general ordering inequalities~$x_u
\geq x_v$ that are not necessarily based on symmetries.

\begin{theorem}
  \label{thm:LRP}
  Let~$G = (V,E)$ be a \tpgraph and let~$S_1, \dots, S_k
  \subseteq V$ be stable sets satisfying the laminar recursion property.
  For each~$i \in [k]$, let~$u_i \in S_i$ adhere to~\ref{LRP1}.
  Then, the clique matrix~$M(G)$ extended by the ordering inequalities
  $x_{u_i} \geq x_v$ for all~$i \in [k]$ and~$v \in S_i \setminus \{u_i\}$
  is totally unimodular.
\end{theorem}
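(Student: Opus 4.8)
The plan is to reduce the statement to Ghouila--Houri's criterion: a matrix is totally unimodular if and only if every subset of its columns can be signed by $\pm 1$ so that, in each row, the signed column sum lies in $\{-1,0,1\}$. Applied to the extended clique matrix, a column set $J\subseteq V$ with a signing $\epsilon\colon J\to\{-1,+1\}$ is admissible exactly when (a) for every maximal clique $C$ of $G$ — equivalently, every root-to-leaf path of $T=T_G$ — we have $\sum_{v\in J\cap C}\epsilon_v\in\{-1,0,1\}$, and (b) whenever $u_i\in J$, every node of $S_i\cap J$ receives the sign $\epsilon_{u_i}$, since otherwise the ordering row $-e_{u_i}+e_v$ would contribute $\pm 2$. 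The base case $k=0$ is just the total unimodularity of $M(G)$ for \tpgraphs, established above. So the whole content is to produce, for arbitrary $J$, a signing satisfying (a) and (b) simultaneously.

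First I would record the building block for (a): signing each $v\in J$ by the parity of the number of its $J$-ancestors in $T$ keeps every signed partial sum along a root-to-leaf path in $\{0,1\}$, which settles (a) on its own. The difficulty is reconciling this alternation with the monochromaticity demanded by (b). Here I would use the laminar recursion property as the skeleton of an inductive construction over~$\cS$. Since each $S_i$ is a stable set, it is path-disjoint, so every maximal clique meets $S_i\cap J$ in at most one node; hence enforcing monochromaticity for a single set can shift any clique sum by at most one. By~\ref{RP2}, each later set $S_{i+1}$ lies in a reduction region $T_{d^i}(\bigcup_{j=1}^{i-1}S_j)$ that avoids the earlier sets, and this detachment is what I would exploit to adjust, region by region, the phase at which the alternation restarts so that all of $S_i\cap J$ lands on one color; \ref{LRP1} supplies the private representative $u_i$ that anchors the sign of its set and lets the nested sets be treated along chains.

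The main obstacle is the global consistency of these local sign choices: a single root-to-leaf path may traverse the regions of several sets, and I must show that re-phasing the alternation inside one reduction region to secure (b) for $S_i$ does not drive the signed sum of such a shared path outside $\{-1,0,1\}$. This is precisely where the recursion property is indispensable — \ref{RP2} nests and separates the regions so that each path picks up only a bounded, controllable contribution from each set it crosses — and it is also where non-stringent families break down, matching the counterexample of Example~\ref{ex:stringency}.

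For the genuinely laminar (rather than merely disjoint) part, I would additionally use that a ``long'' ordering row $-e_{u_a}+e_w$ with $S_b\subsetneq S_a$ and $w\in S_b$ decomposes as $(-e_{u_a}+e_{u_b})+(-e_{u_b}+e_w)$, so nesting is governed by chains of private elements through~\ref{LRP1}. The remaining work is then to verify that the signing built for the inclusionwise maximal sets via~\ref{LRP2} already respects monochromaticity on every nested set at once, so that no separate repair is needed for the implied long rows.
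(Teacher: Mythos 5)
Your setup is sound: the column form of Ghouila--Houri is a valid criterion, your translation of the ordering rows into the monochromaticity condition (b) is correct, and the depth-parity signing does handle (a) alone. But the proof stops exactly where the theorem's content begins. The entire difficulty is to show that, for \emph{every} column subset~$J$, a signing exists that satisfies (a) and (b) simultaneously, and you explicitly defer this (``I must show that re-phasing the alternation \dots does not drive the signed sum \dots outside $\{-1,0,1\}$'') without supplying the argument. This is not a routine verification: the paper needs two dedicated lemmas (Lemmas~\ref{lem:recursiveEBC1} and~\ref{lem:recursiveEBC}) plus a multi-stage induction over the laminar family to carry out the analogous construction. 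Worse, the one quantitative justification you do offer is false: since each $S_i$ is path-disjoint, a maximal clique indeed meets $S_i\cap J$ in at most one node, but flipping that single node's sign changes the clique's signed sum by \emph{two}, not ``at most one,'' so a clique at sum $1$ can be pushed to $-1$ but one at sum $0$ can be pushed to $\pm 2$ depending on direction --- the repair step is exactly as delicate as the sign bookkeeping in the paper's $C^+,C^-,C^0,C_0$ case analysis, and nothing in your outline controls it. Your treatment of the nested sets has a further hole: condition (b) only forces monochromaticity of $S_i\cap J$ when $u_i\in J$, so ``the signing for the maximal sets already respects monochromaticity on every nested set'' is only automatic if you make $S\cap J$ monochromatic for every maximal $S$ \emph{unconditionally} --- a strictly stronger claim that again is asserted, not proved.

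It is also worth noting that you have chosen the harder of the two dual forms. The paper applies Ghouila--Houri to \emph{rows}: it 2-colors a subset of the root--leaf paths together with a subset of the ordering inequalities. There the ordering rows are free objects to be assigned to $D^+$ or $D^-$ after the path coloring is fixed; a follower $w$ with $\delta_w=\pm1$ forces its row into one class (restoring $\Delta_w=0$), and the recursion property is needed only to guarantee that the induced total $\Delta_{u_i}=\sum_{w\in S_i}\delta_w$ at the leader stays in $\{0,\pm1\}$. In your column form the ordering rows instead impose hard equality constraints between node signs across different branches of~$T$, which must be reconciled with the path constraints for every~$J$; these cross-branch equalities are precisely what your ``re-phasing'' must resolve and what remains unproved. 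As it stands the proposal is a plausible plan with the correct skeleton, but the central combinatorial lemma is missing, so the proof is incomplete.
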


To prove this theorem, we need the following lemmata and concepts.

Let~$\cP$ be the set of root-leaf paths of a rooted forest~$T = (V,A)$.
We identify each path in~$\cP$ by its unique leaf node.
For a node~$v \in V$, we denote by~$\suc(v)$ the set of direct successors of~$v$
in~$T$, i.e., $\suc(v) = \{w \in V \st (v,w) \in A\}$.
If~$P \subseteq \cP$ is a set of paths, we denote by~$P_v$ the set of all
paths containing~$v \in V$.
Note that, if~$v$ is a leaf, then~$P_v = \{v\}$. Otherwise, $P_v =
\bigcup_{w \in \suc(v)} P_w$.

An \emph{equicoloring} (equitable bicoloring) of~$P \subseteq \cP$ is a partition~$P^+ \cup P^-$
of~$P$ such that, for every~$v \in V$,
$
  \delta_v \define \card{P^+_v} - \card{P^-_v} \in \{0, \pm 1\}.
$
Due to the forest structure of~$T$, for each~$v \in V$ that is not a
leaf, we have $\delta_v = \sum_{w \in \suc(v)} \delta_w$.
\begin{lemma}
  \label{lem:recursiveEBC1}
  Let~$T = (V,A)$ be a rooted tree with root~$r$
  and let~$P \subseteq \cP$.
  Let~$S \subseteq V$ be non-empty, path-disjoint and
  suppose that~$P_v \neq \emptyset$ for each~$v \in S$.
  Then, there exists an equicoloring~$P^+ \cup P^-$ of~$P$ such
  that
  \[
    \sum_{v \in S} \delta_v \in
    \begin{cases}
      \{-1\}, & \text{if } \delta_r = -1,\\
      \{0,1\}, & \text{if } \delta_r \in \{0,1\}.
    \end{cases}
  \]
\end{lemma}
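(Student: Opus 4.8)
The plan is to recast an equicoloring as a choice of signs and then induct on the tree. Writing $x_p = +1$ if $p \in P^+$ and $x_p = -1$ if $p \in P^-$, we have $\delta_v = \sum_{p \in P_v} x_p$, and the equicoloring condition is exactly $\card{\delta_v} \le 1$ for all $v$. Two structural facts drive the argument. First, because each $x_p = \pm 1$, we have $\delta_v \equiv \card{P_v} \pmod 2$ for \emph{any} sign choice; together with $\card{\delta_v} \le 1$ this forces $\delta_v = 0$ when $\card{P_v}$ is even and $\delta_v = \pm 1$ (with the sign free) when $\card{P_v}$ is odd. Second, since $S$ is path-disjoint, no path of $P$ contains two nodes of $S$, so $\sum_{v \in S}\delta_v = \sum_{p \in P_S} x_p$ where $P_S = \bigcup_{v \in S} P_v$ is a disjoint union. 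Hence \emph{both} quantities to be controlled, namely $\delta_r = \sum_{p \in P} x_p$ and $\sum_{v \in S}\delta_v = \sum_{p \in P_S} x_p$, are partial sign-sums, and the whole statement becomes a question of steering two such sums simultaneously.

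I would induct on $\card{V}$. The base case $\card{V}=1$ (where necessarily $S=P=\{r\}$) is immediate. For the step, note first that if $r \in S$ then $S=\{r\}$, since $P_v \subseteq P_r$ for every $v$ forces path-disjointness to collapse; in that case $\sum_{v\in S}\delta_v = \delta_r$ and any equicoloring with $\delta_r \in \{0,1\}$ (constructed as below) works. Otherwise $r \notin S$, and I decompose $P$ over the children $w_1,\dots,w_m$ of $r$: the paths through $w_i$ form $P^{(i)}$, these partition $P$, and each $v \in S$ lies in a unique subtree $T_i$, giving an induced path-disjoint set $S_i$ with $P^{(i)}_v = P_v \neq \emptyset$. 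I apply the induction hypothesis to each $(T_i, P^{(i)}, S_i)$ with $S_i \neq \emptyset$, and to $(T_i, P^{(i)}, \{w_i\})$ when $S_i = \emptyset$ but $P^{(i)} \neq \emptyset$ merely to obtain an equicoloring of that subtree. This yields, per subtree, an equicoloring realizing a pair $(\delta_{w_i}, \sigma_i)$ with $\sigma_i = \sum_{v \in S_i}\delta_v$; flipping $P^+ \leftrightarrow P^-$ inside one subtree negates both entries and leaves all other subtrees untouched, so subtree signs are chosen independently.

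It then remains to pick the flips. By the parity fact $\card{\delta_{w_i}}$ is fixed by the parity of $\card{P^{(i)}}$, and a short case distinction on the pair delivered by the induction hypothesis places each subtree into one of a few types: $\sigma_i = \delta_{w_i} \in \{\pm 1\}$ (coupled, from an odd subtree with base pair $(\pm1,\pm1)$), $\delta_{w_i} \in \{\pm1\}$ with $\sigma_i = 0$ (odd, base pair $(\pm1,0)$), $\delta_{w_i} = 0$ with $\sigma_i \in \{\pm1\}$ (even, base pair $(0,\pm1)$), or $\delta_{w_i} = \sigma_i = 0$. Grouping by type and letting $a$, $q$, $e$ be the signed sums over the coupled, the decoupled-odd, and the even groups respectively, the totals are $\delta_r = a + q$ and $\sum_{v\in S}\delta_v = a + e$, with each of $a$, $q$, $e$ freely selectable within its range and of its group's parity. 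The number of nonzero $\delta_{w_i}$ equals the number of odd subtrees, which has the parity of $\card{P}$, so I can force $\delta_r \in \{0,1\}$; taking $a$ equal to the parity of the coupled group ($a \in \{0,1\}$) then makes both required inequalities on $q$ and $e$ hold, giving $\sum_{v\in S}\delta_v \in \{0,1\}$ as well. Since the constructed equicoloring has $\delta_r \in \{0,1\}$, the case $\delta_r = -1$ of the statement is vacuous (and is in any event equivalent to the other branch by flipping the whole coloring), and the case $\delta_r \in \{0,1\}$ holds by construction.

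The main obstacle is precisely this coordination: producing a \emph{single} global equicoloring that pins down both $\delta_r$ and $\sum_{v \in S}\delta_v$ at once, rather than each separately. The two structural observations — parity fixing $\card{\delta_v}$ at every node, and path-disjointness turning the $S$-sum into a disjoint partial sign-sum — are what reduce this to elementary counting of $\pm 1$'s across the type-groups; getting the small inequalities on $q$ and $e$ to close simultaneously is the step that must be verified with care.
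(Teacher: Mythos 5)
Your proof is correct and follows essentially the same route as the paper's: induction on the tree, decomposition at the root into child subtrees, application of the inductive hypothesis to each, classification of subtrees by the pair $(\delta_{w_i},\sigma_i)$ (your coupled / decoupled-odd / even types correspond exactly to the paper's classes $C^+\cup C^-$, $C^0$, $C_0$), and balancing via independent sign flips per subtree. The $\pm 1$ sign-sum reformulation and the $a,q,e$ parity bookkeeping are only presentational differences from the paper's pairing-and-alternating-flip argument.
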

\begin{proof}
  We proceed by induction on the height~$h$ of~$T$.
  If~$h = 1$, then~$T$ consists just of the root node~$r$ and~$S \subseteq \{r\}$.
  Moreover, if~$r \in S$, then~$S = \{r\}$ since every path in~$\cP$
  contains~$r$.
  In both cases, we can choose $P^- = \{r\}$ and $P^+ = \emptyset$ and the assertion holds.
  
  If~$h > 1$ and~$r \notin S$, consider the forest~$T'$ that arises by
  removing~$r$ and all  its outgoing arcs from tree~$T$.
  The height of~$T'$ is~$h - 1$.
  Thus, if~$T'$ is connected, the assertion follows by induction.
  Otherwise, $T'$ has~$k > 1$ connected components which are rooted trees.
  Let~$r_1,\dots, r_k$ be the corresponding root nodes and, for~$i \in
  [k]$, let~$S_i$ be the nodes in~$S$ that are descendants of~$r_i$.
  By the inductive hypothesis, we can find for every connected component~$i
  \in [k]$ an equicoloring such that~$\sum_{v \in S_i} \delta_v =
  \delta_{r_i}$, or~$\sum_{v \in S_i} \delta_v = 0$ and~$\delta_{r_i} = 1$,
  or~$\sum_{v \in S_i} \delta_v = 1$ and~$\delta_{r_i} = 0$.
  
  Let~$C^-$ be the connected components~$i$ with~$\delta_{r_i} =
  -1$, let~$C^+$ be the connected components~$i$ with~$\delta_{r_i} =
  \sum_{v \in S_i} \delta_v = 1$, let~$C^0$ be the connected
  components~$i$ with~$\delta_{r_i} = 1$ and~$\sum_{v \in S_i} \delta_v =
  0$, and let~$C_0$ be the connected components~$i$ with $\delta_{r_i} = 0$
  and~$\sum_{v \in S_i} \delta_v = 1$.
  After possibly changing the two classes of the equicoloring for
  some components, we can assume~$\card{C^+} - \card{C^-} \in \{0,1\}$.
  Combining these equicolorings for the components in~$C^+$
  and~$C^-$ gives us an equicoloring of~$C^+ \cup C^-$
  with
  \[
    \Delta \define
    \sum_{i \in C^+ \cup C^-} \delta_{r_i}
    =
    \sum_{i \in C^+ \cup C^-} \sum_{v \in S_i} \delta_v \in \{0,1\}.
  \]
  First, suppose~$C_0 = \emptyset$.
  If~$C^0 \neq \emptyset$, let~$C_1,\dots,C_\ell$ be an ordering
  of the components in~$C^0$ with corresponding roots~$r(1),\dots,r(\ell)$.
  We distinguish whether~$\Delta = 0$ or~$\Delta = 1$.
  If~$\Delta = 0$, we flip the two classes of the equitable partitions
  in~$C^0$ with an even label; if~$\Delta = 1$, we flip the classes for
  partitions with an odd label.
  Then,
  \[
    \sum_{i = 1}^k \delta_{r_i}
    =
    \Delta + \sum_{i = 1}^\ell \delta_{r(i)}
    =
    \begin{cases}
      0, & \text{if } \Delta = 0 \text{ and } \ell \text{ is even,} \text{ or } \Delta = 1 \text{ and } \ell \text{ is odd,}\\
      1, & \text{if } \Delta = 1 \text{ and } \ell \text{ is even,} \text{ or } \Delta = 0 \text{ and } \ell \text{ is odd.}\\
    \end{cases}
  \]
  That is, $\sum_{i = 1}^k \delta_{r_i} \in \{0,1\}$ and~$\sum_{i = 1}^k
  \sum_{v \in S_i} \delta_v \in \{0,1\}$.

  Second, if~$C_0 \neq \emptyset$, we proceed as before to find an
  equicoloring of the components in~$\bar{C} \define C^+ \cup C^-
  \cup C^0$
  with~$\sum_{i \in \bar{C}} \delta_{r_i} \in \{0,1\}$ and~$\sum_{i \in \bar{C}}
  \sum_{v \in S_i} \delta_v \in \{0,1\}$.
  Since the connected components from~$C_0$ do not affect the value
  of~$\sum_{i \in \bar{C}} \delta_{r_i}$, we can flip the classes of the
  bicoloring of every second component in~$C_0$ to maintain the property
  that~$\sum_{i = 1}^k \sum_{v \in S_i} \delta_v \in \{0,1\}$.

  To conclude the proof, we extend the equicolorings of the
  individual connected components of~$T'$ to an equicoloring
  of~$T$ by associating each path in~$T'$ by the corresponding path in~$T$.
  Then, for every~$v \in V \setminus \{r\}$, $\delta_v \in \{0, \pm 1\}$
  follows trivially.
  Moreover, since~$\delta_r = \sum_{i = 1}^k \delta_{r_i}$, also~$\delta_r
  \in \{0, \pm 1\}$.
  In particular, $\delta_r$ has the desired relation with~$\sum_{v \in S}
  \delta_v$ by the above argumentation.
\end{proof}
\begin{lemma}
  \label{lem:recursiveEBC}
  Let~$T = (V,A)$ be a rooted tree,
  let~$S_1, \dots, S_k \subseteq V$ have the recursion property, and let~$P
  \subseteq \cP$.
  If~$P_v \neq \emptyset$ for each~$v \in \bigcup_{i = 1}^k S_i$, then
  there exists an equicoloring~$P^+ \cup P^-$ of~$P$ such
  that~$\sum_{v \in S_i} \delta_v \in \{0, \pm 1\}$ for all~$i \in [k]$.
\end{lemma}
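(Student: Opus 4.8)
The plan is to prove the statement by induction on the number~$k$ of sets, using Lemma~\ref{lem:recursiveEBC1} both as the base case and as the central gluing tool. For~$k=1$ there is a single set~$S_1$, which is path-disjoint and satisfies~$P_v\neq\emptyset$ for all~$v\in S_1$, so Lemma~\ref{lem:recursiveEBC1} directly yields an equicoloring with~$\sum_{v\in S_1}\delta_v\in\{0,\pm1\}$, which is exactly the claim. What makes Lemma~\ref{lem:recursiveEBC1} more than a base case is that its conclusion is \emph{stronger} than what we are proving: it also controls the sign of~$\sum_{v\in S}\delta_v$ by the sign of~$\delta_r$ at the root. This root-sign control is the interface along which colorings of different parts of the tree can be combined, and I expect it to be indispensable in the inductive step.

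For the inductive step I would peel off the first set~$S_1$ and apply the inductive hypothesis to~$S_2,\dots,S_k$. First I would verify that~$S_2,\dots,S_k$ again satisfy the recursion property: pairwise disjointness~(RP1) is inherited, and since the reduced graph~$T_d(\cdot)$ only shrinks when its argument grows, the inclusion in~(RP2) for the shifted family follows from the inclusion for the original family by enlarging the excluded set~$\bigcup_{j} S_j$. Thus the inductive hypothesis supplies an equicoloring~$P^+\cup P^-$ of~$P$ with~$\sum_{v\in S_i}\delta_v\in\{0,\pm1\}$ for every~$i\in\{2,\dots,k\}$. It then remains to repair~$S_1$ without destroying the balance already obtained for the tail sets. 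To localize this repair I would use the node~$d^1$ from~(RP2) with~$i=1$, which satisfies~$S_2\subseteq T_{d^1}(\emptyset)$, together with the fact that~$S_3,\dots,S_k$ lie in subtrees containing no node of~$S_1$. Deleting~$d^1$ decomposes~$T$ into rooted components; on components free of~$S_1$ I keep the coloring from the inductive hypothesis, while on the remaining part I reapply Lemma~\ref{lem:recursiveEBC1} to~$S_1$, and then flip the two color classes of whole components, exactly as in the proof of Lemma~\ref{lem:recursiveEBC1}, to bring~$\sum_{v\in S_1}\delta_v$ into~$\{0,\pm1\}$ while every~$\delta_v$ remains in~$\{0,\pm1\}$.

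The step I expect to be the main obstacle is ensuring that repairing~$S_1$ does not spoil the balance of the sets that share paths with~$S_1$. Recoloring a root-leaf path changes~$\delta_v$ at every ancestor it passes through, so the repair does not stay confined to the subtrees hanging below~$d^1$; in particular it can perturb the sum for the set~$S_2$, which by~(RP2) lives below~$d^1$. The recursion property is precisely the hypothesis that isolates these delicate interactions to a single neighboring set inside one reduced subtree, and the root-sign flexibility of Lemma~\ref{lem:recursiveEBC1} is what gives enough freedom to satisfy both constraints at once by an appropriate choice of component flips. Turning this simultaneous rebalancing into a rigorous argument—rather than the routine bookkeeping of the~$\delta$ values—is where the real work of the proof lies.
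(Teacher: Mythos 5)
There is a genuine gap, and you have located it yourself: your inductive step is not closed, and the direction of your induction makes it hard to close. By peeling off~$S_1$ and repairing it last, you must recolor paths through the region where~$S_2,\dots,S_k$ live (indeed~$S_2\subseteq T_{d^1}(\emptyset)$ sits exactly below~$d^1$), and your proposed repair --- flipping whole components of~$T-d^1$ --- can destroy the balance of a tail set that meets several of those components: if~$\sum_{v\in S_2\cap A}\delta_v=1$ and~$\sum_{v\in S_2\cap B}\delta_v=-1$ for two components~$A,B$, flipping only~$B$ yields~$\sum_{v\in S_2}\delta_v=2$. You offer no mechanism to prevent this, and you explicitly defer ``the real work'' of the simultaneous rebalancing.

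The paper's proof resolves this with two ideas you are missing. First, it inducts in the opposite order: it assumes a good equicoloring for~$S_1,\dots,S_{k-1}$ and adds~$S_k$ last. By~\ref{RP2}, $S_k$ lies inside~$T_k=T_{d^{k-1}}(\bigcup_{i\le k-2}S_i)$, a reduced subtree containing no node of the earlier sets, so the repair is local by construction rather than something to be argued. Second --- and this is the key technical device --- the imbalance~$\delta_{r^k}$ at the root~$r^k$ of~$T_k$ is determined modulo~$2$ by~$\card{P_{r^k}}$ in \emph{every} equicoloring; hence the fresh coloring of~$P_{r^k}$ obtained from Lemma~\ref{lem:recursiveEBC1} (which controls~$\sum_{v\in S_k}\delta_v$) can be flipped so that its root value equals the old~$\delta_{r^k}$ exactly. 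Splicing it in then leaves~$\delta_v$ unchanged for every~$v$ outside~$T_k$, including all ancestors of~$r^k$ --- precisely the ancestor-propagation problem you flagged as the obstacle. Your instinct that the root-sign control in Lemma~\ref{lem:recursiveEBC1} is the gluing interface is correct, but without the parity observation and the reversed induction order, the argument as proposed does not go through.
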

\begin{proof}
  We prove the assertion by induction on~$k$.
  If~$k = 1$, the statement follows from Lemma~\ref{lem:recursiveEBC1}.
  Inductively, we can thus assume that there is an equicoloring~$P^+ \cup
  P^-$ of~$P$ that has the desired properties
  for~$S_1, \dots, S_{k-1}$, and show that we can adapt it to such an
  equicoloring for~$S_1,\dots, S_k$.

  Let~$d^{k-1}$ adhere to~\ref{RP2}, and let~$T_k = T_{d^{k-1}}(\bigcup_{i
    = 1}^{k-2} S_i)$.
  W.l.o.g.\ we can assume that~$T_k$ consists of a single connected
  component.
  Otherwise, we show the result for the graph~$T'$ in which we replace the
  arcs from~$d^{k-1}$ to the roots of~$T_k$ by a single arc~$(d^{k-1}, d')$
  and connect~$d'$ with the roots of~$T_k$.
  The equicoloring found for~$T'$ is then also an equicoloring
  for~$T$ with the same properties.

  Let~$r^k$ be the root of~$T_k$.
  The equicoloring~$P^+ \cup P^-$ derived for~$S_1, \dots, S_{k-1}$
  yields~$\delta_{r^k} \in \{0, \pm 1\}$.
  If~$\card{P_{r^k}}$ is even, then~$\delta_{r^k}$ is necessarily~0 in
  every equicoloring.
  Analogously, if~$\card{P_{r^k}}$ is odd, then~$\delta_{r^k} = \pm 1$ in
  every equicoloring.
  By Lemma~\ref{lem:recursiveEBC1}, there exists an equicoloring
  of~$P_{r^k}$ with~$\delta^k$-values such that~$\sum_{v \in S_k}
  \delta^k_v \in \{0, \pm 1\}$.
  By the previous observation, $\delta^k_{r^k} = 0$ if and only
  if~$\delta_{r^k} = 0$.
  Thus, after possibly flipping the two classes found in the equicoloring
  of~$P_{r^k}$, $\delta^k_{r^k} = \delta_{r^k}$.
  Consequently, if we change the equicoloring~$P^+ \cup P^-$
  on~$P_{r^k}$ such that it coincides with the bicoloring found for~$T_k$,
  it is still an equicoloring for~$T$.
  It satisfies~$\sum_{v \in S_k} \delta_v \in \{0, \pm 1\}$, and the values
  of~$\sum_{v \in S_i} \delta_v$ for~$i \in [k-1]$ did not change, because~$T_k$
  does not contain any node from~$\bigcup_{i = 1}^{k-1} S_i$.
  That is, we have found the desired equicoloring.
\end{proof}
\begin{proof}[Proof of Theorem~\ref{thm:LRP}]
  We may assume that~$G$ is connected and that no node involved in an ordering
  inequality is the root node of the tree representation of~$G$:
  Otherwise, we introduce a node~$w$ that is connected with all nodes
  in~$G$, yielding a graph~$G'$.
  Moreover, we can recover the assertion for~$G$ from~$G'$, because the
  extended clique matrix of~$G$ is a submatrix of the extended clique
  matrix of~$G'$.

  In the following, we work with the tree representation of~$G$.
  Let~$P \subseteq \cP$ be a set of paths in the tree representation.
  Let~$D$ be a set of ordering inequalities encoded via the leader-follower
  pair~$(u,v)$, and let~$U \define \{u_1, \dots, u_k\}$.
  That is, $D_{u,v}$ is the inequality~$x_u \geq x_v$.
  To show that $M(G)$ extended by
  ordering inequalities is totally unimodular, we use Ghouila-Houri's
  equicoloring criterion~\cite{GH1962}.
  That is, we need to find partitions~$P^+ \cup P^-$ of~$P$ and $D^+ \cup D^-$
  of~$D$ such that
  \[
    \Delta_w
    =
    \card{P^+_w} - \card{P^-_w}
    +
    \sum_{u \in V}(\card{D^+_{u,w}} - \card{D^-_{u,w}})
    +
    \sum_{v \in V}(\card{D^-_{w,v}} - \card{D^+_{w,v}}) \in \{0,\pm 1\}
  \]
  for all~$w \in V$. 
  Our strategy is to show the statement for the case that all
  sets~$S_1,\dots,S_k$ are pairwise disjoint first.
  Afterwards, we will use this result as an anchor for the general case.
  The anchor allows us to derive a result for the inclusionwise maximal
  sets among~$S_1,\dots, S_k$.
  The corresponding equitable partition will then be modified by taking
  also non-maximal sets into account.

  Suppose that all sets~$S_1,\dots, S_k$ are pairwise disjoint.
  From Lemma~\ref{lem:recursiveEBC} we derive an equicoloring~$P^+ \cup
  P^-$ of~$P$ with~$\delta_w = \card{P^+_w} -
  \card{P^-_w} \in \{0, \pm 1\}$ such that~$\sum_{w \in S_i} \delta_w \in
  \{0, \pm 1\}$ for all~$i \in [k]$.
  Note that the lemma only applies to the nodes~$w$ in~$S_i$ for which~$P_w
  \neq \emptyset$, however, it trivially extends to the general case.
  In the following, we extend this equicoloring of~$P$ to an
  equicoloring of~$(P,D)$ by assigning a suitable partition of~$D$.
  That is, we need to partition~$D$ such that
  \[
    \Delta_w
    =
    \begin{cases}
      \delta_w
      +
      \sum_{v \in V}(\card{D^-_{w,v}} - \card{D^+_{w,v}}),
      & \text{if } w \in U,\\
      \delta_w
      +
      \sum_{u \in V}(\card{D^+_{u,w}} - \card{D^-_{u,w}}),
      & \text{otherwise},
    \end{cases}
  \]
  is contained in~$\{0, \pm 1\}$ for every~$w \in V$.
  In particular, if~$w$ is a follower, there is a unique leader~$u$ such
  that its~$\Delta$-value reduces to~$\delta_w = \delta_w +
  \card{D^+_{u,w}} - \card{D^-_{u,w}}$ by the assumption that the
  sets~$S_i$ are pairwise disjoint.

  Note that, if~$\delta_w = 1$ for some follower~$w$, then we necessarily need
  to assign its leader-follower pair~$(u,w)$ to~$D^-$ to ensure~$\Delta_w = 1 + \card{D^+_{u,w}} -
  \card{D^-_{u,w}} \in \{0, \pm 1\}$.
  Analogously, if~$\delta_w = -1$ for some follower~$w$, then~$(u,w) \in D^+$.
  For followers~$w$ with~$\delta_w = 0$, however, we have two choices and
  we will specify later on how to assign these ordering inequalities
  to~$D^+$ and~$D^-$.
  Denote these not yet assigned inequalities (identified by their
  followers~$w \in V$) by~$\bar{D}$.

  Until now we have guaranteed that~$\Delta_w \in \{0, \pm 1\}$ for all~$w
  \in V \setminus (U \cup \bar{D})$.
  For a leader-follower pair~$(u,v)$, observe that assigning~$(u,v) \in D$
  with~$\delta_v = 1$ to~$D^-$  increases~$\Delta_u$ by~1, since~$u$ has a
  positive coefficient in the negated ordering inequality~$-(-x_u + x_v \leq 0)$;
  analogously, assigning~$(u,v) \in D$ with~$\delta_v = -1$ to~$D^+$
  decreases~$\Delta_u$ by~1.
  That is, for each~$u_i \in U$, the current assignment of~$D^+$ and~$D^-$
  implies
  \[
    \Delta_{u_i}
    =
    \delta_{u_i} + \sum_{\substack{v \in V\colon\\ (u_i,v) \in D}} \delta_v
    =
    \sum_{w \in S_i} \delta_w,
  \]
  where the last equation holds since~$\delta_w = 0$ for all~$w \in S_i$
  for which~$P_w = \emptyset$.
  By Lemma~\ref{lem:recursiveEBC}, we thus conclude that~$\Delta_{u_i} \in \{0,
  \pm 1\}$.

  To conclude the first case, we need to assign the ordering inequalities in~$\bar{D}$
  to~$D^+$ and~$D^-$.
  Since~$\delta_w = 0$ for each~$w \in \bar{D}$, we can assign them
  arbitrarily to~$D^+$ and~$D^-$ to achieve~$\Delta_w = \pm 1$.
  The only restriction we need to take into account is the coupling of all
  ordering inequalities via their corresponding leaders~$\Delta_u$.
  Because~$\Delta_u \in \{0, \pm 1\}$ if we do not consider~$\bar{D}$,
  we can easily maintain~$\Delta_u \in \{0, \pm 1\}$ by assigning
  the relevant ordering inequalities in~$\bar{D}$ alternatingly to~$D^+$ and~$D^-$.
  Consequently, $(C,D)$ admits an equicoloring and the assertion
  follows for the first case.

  Note that we can choose the alternating sequence such that is has the
  following property, which will be exploited in the remainder of the
  proof:
  For each~$S_i$, let~$R_1, \dots, R_j \subseteq S_i \setminus \{u_i\}$ be
  pairwise disjoint.
  Then, the alternating sequence of~$\bar{D}$ can be chosen such
  that~$\sum_{w \in R_\ell} \Delta_w \in \{0, \pm 1\}$ for all~$\ell \in
  [j]$.
  Indeed, this property holds, by first iterating over the elements
  in~$R_1 \cap \bar{D}$, then over the elements in~$R_2 \cap \bar{D}$, and
  so on.

  In the second case, suppose we have relabeled the sets~$S_i$ such
  that~$S_{\ell + 1},\dots, S_k$ are inclusionwise maximal.
  If we apply the previous arguments to~$S_{\ell + 1}, \dots, S_k$, we
  derive a bicoloring such that, for each~$i \in \{\ell + 1,
  \dots, k\}$ and~$w \in S_i \setminus \{u_i\}$,
  \[
    \Delta_w =
    \begin{cases}
      0, & \text{if } P_w \neq \emptyset,\\
      \pm 1, & \text{if } P_w = \emptyset.
    \end{cases}
  \]
  Moreover, if~$S_1, \dots, S_j$ are the inclusionwise maximal sets
  among~$S_1, \dots, S_\ell$, then no leader~$u_{\ell + 1}, \dots, u_k$ is
  contained in any of the sets~$S_1, \dots, S_j$ by~\ref{LRP1}.
  Thus, we can select the equicoloring such
  that~$\sum_{w \in S_i} \Delta_w \in \{0, \pm 1\}$ for all~$i \in [\ell]$
  by the previously derived property.

  We continue by assigning the ordering inequalities with leaders~$u_1,
  \dots, u_j$ to~$D^+$ and~$D^-$.
  Note that this does not change the~$\Delta$-value of any node~$w$
  outside~$\bigcup_{i = 1}^j S_i$.
  Again, if a follower~$w$ has~$\Delta_w = 1$, we need to assign its
  ordering inequality to~$D^-$ and if it has~$\Delta_w = -1$ to~$D^+$.
  As above, this gives the corresponding leader~$u_i$ a~$\Delta$-value
  of~$\sum_{w \in S_i} \Delta_w$, which is~$0$ or~$\pm 1$ by the derived
  property.
  Hence, we can assign the remaining ordering inequalities whose follower
  has~$\Delta_w = 0$ in an alternating order to~$D^+$ and~$D^-$ to
  maintain~$\Delta_{u_i} \in \{0, \pm 1\}$.
  We thus find an equitable partition such that, for all~$i \in [j]$,
  \[
    \Delta_w =
    \begin{cases}
      \pm 1, & \text{if } P_w \neq \emptyset,\\
      0, & \text{if } P_w = \emptyset.
    \end{cases}
  \]
  Using the same arguments as above, we can proceed iteratively until we
  also assigned the ordering inequalities of inclusionwise minimal sets
  in~$S_1,\dots,S_k$.
\end{proof}
Theorem~\ref{thm:stringentSST} is now a special case of
Theorem~\ref{thm:LRP} as we sketch next.

\begin{proof}[Proof of Theorem~\ref{thm:stringentSST}]
  We briefly sketch the proof's idea.
  If there is no edge contained in an orbit, the orbits form stable sets
  in~$G$.
  Moreover, the stabilizer computations guarantee that the inclusionwise
  maximal orbits are disjoint.
  Stringency implies that the inclusionwise maximal orbits have the
  recursion property.
  Since the SST leaders are not contained in succeeding orbits, the set of
  all orbits have the laminar recursion property.
  The result follows then by Theorem~\ref{thm:LRP}.
\end{proof}

\section{Preliminary Computational Results}
\label{sec:experiments}

In this section, we discuss the impact of SST presolving, cuts, and clique
cuts for the edge formulation of the maximum cardinality stable set problem.
Our test set consists of all graphs from the Color02 symposium~\cite{Color02} and
all complemented graphs from the max-clique DIMACS challenge~\cite{DIMACS} for
which we could find symmetries using \texttt{SageMath~9.1}~\cite{sagemath} within one
hour.
This gives us a test set of~105 graphs.
For all graphs, we computed at most~50 rounds of SST cuts, where we
selected an orbit of either minimal or maximal size; the leader is the
variable of smallest index in each orbit.

The left part of Table~\ref{tab:runningtimes} shows the proportion of nodes
and edges that remain in the graph after applying the deletion operation of
SST presolving.
Column ``edges+'' gives the proportion of edges after additionally applying
the addition operation.
SST cuts based on minimum orbits reduce the number of nodes
and edges by roughly~\SI{10}{\percent} and~\SI{20}{\percent}, respectively.
Selecting maximum orbits even reduces these quantities by~\SI{20}{\percent}
and~\SI{40}{\percent}; the biggest reduction can be achieved for the
instance \texttt{latin\_square\_10} from Color02, where the number of nodes drops
by~\SI{75}{\percent} and of edges even by~\SI{94}{\percent}.
Using the addition operation increases the number of edges by five
percentage points again.

\begin{table}[t]
  \begin{scriptsize}
    \caption{Comparison of effect of presolving of different SST variants.}
    \label{tab:runningtimes}
    \begin{tabular*}{\textwidth}{@{}l@{\;\;\extracolsep{\fill}}rrrrrrrrr@{}}\toprule
      & \multicolumn{3}{c}{graph reductions} & \multicolumn{6}{c}{solving times}\\
      \cmidrule{2-4} \cmidrule{5-10}
      orbit rule & nodes & edges & edges+ & presol & cut & clique & presol+ & cut+ & clique+\\
      \midrule
      minimum  & 0.90 & 0.81 & 0.85 & 0.55 & 0.56 & 0.56 & 0.43 & 0.50 & 0.47\\
      maximum  & 0.80 & 0.60 & 0.65 & 0.36 & 0.29 & 0.30 & 0.25 & 0.31 & 0.30\\
      \bottomrule
    \end{tabular*}
  \end{scriptsize}
\end{table}

In a second experiment, we investigated the impact of SST
presolving and cuts on running time.
These experiments have been conducted using \text{SCIP~8.0.0.1} (githash
a4eeac7) with \texttt{SoPlex~5.0.1.3} as LP solver; all symmetry handling
methods in \texttt{SCIP} have been disabled to get a fair comparison.
No time limit has been imposed and all experiments were run on a Linux
cluster with Intel Xeon E5 \SI{3.5}{\GHz} quad core processors and
\SI{32}{\giga\byte} memory.
It turns out that \texttt{SCIP} can solve most of these selected instances,
easily even without symmetry handling.
Therefore, we extracted the instances that need at least one second to be
solved, which leads to a reduced test set of~26 instances.

Without any symmetry handling, the geometric mean running time
is~\SI{10.3}{\second}.
The right part of Table~\ref{tab:runningtimes} shows the proportion of
solving time needed by the remaining methods for graphs obtained by the
deletion operation (presol), and additionally adding SST cuts (cut) or SST
clique cuts (clique).
The postfix ``+'' indicates that we additionally apply the addition
operation.
To generate clique cuts, we take the set of followers~$F$ of a leader
and greedily compute a clique covering of~$F$ within the subgraph induced by~$F$.
Again, the maximum orbit rule performs better.
Even just applying the deletion operation reduced the running time
by~\SI{64}{\percent}, adding either type of cuts reduces running time
by~\SI{70}{\percent}.
Additionally using the addition operation performs best and leads to a
running time reduction of~\SI{75}{\percent}.

%
\bibliographystyle{plain}
%

\end{document}